\documentclass[12pt,leqno,a4paper,final]{amsart}
\usepackage{amsmath, amsthm, verbatim, amsfonts, amssymb, color}
\usepackage{mathrsfs}
\usepackage{dsfont}
\usepackage[a4paper,margin=3cm]{geometry}
\usepackage{marginnote}
\usepackage[notcite,notref]{showkeys}

\theoremstyle{plain}
\newtheorem{theorem}{Theorem}[section]
\newtheorem{corollary}[theorem]{Corollary}
\newtheorem{lemma}[theorem]{Lemma}
\newtheorem{proposition}[theorem]{Proposition}
\theoremstyle{definition}
\newtheorem{remark}[theorem]{Remark}

\definecolor{wco}{rgb}{0.5,0.2,0.3}

\newcommand\Ee{\mathds E}
\newcommand\nat{\mathds N}
\newcommand\real{{\mathds{R}}}
\newcommand\I{\mathds 1}
\newcommand\Pp{\mathds P}

\newtheorem*{ack}{Acknowledgement}

\newcommand\Bscr{\mathscr{B}}

\newcommand\id{\operatorname{id}}

\newcommand\dup{\mathrm{d}}
\newcommand\eup{\mathrm{e}}
\newcommand\iup{\mathop{\mathrm{i}}}

\begin{document}
\title[Harnack Inequalities for SDEs]{\bfseries Harnack Inequalities for SDEs Driven by Time-Changed Fractional
Brownian Motions}

\author[C.-S.~Deng]{Chang-Song Deng}
\address[C.-S.~Deng]{School of Mathematics and Statistics\\ Wuhan University\\ Wuhan 430072, China}
\email{dengcs@whu.edu.cn}
\thanks{The first-named author gratefully acknowledges support through the Alexander-von-Humboldt foundation, and NNSFs of China (Nos.\ 11401442, 11371283). Part of this work was completed during his stay at TU Dresden as a Humboldt fellow. He is grateful for the hospitality and the excellent working conditions.}

\author[R.\,L.~Schilling]{Ren\'e L.\ Schilling}
\address[R.\,L.~Schilling]{TU Dresden\\ Fachrichtung Mathematik\\ Institut f\"{u}r Mathemati\-sche Stochastik\\ 01062 Dresden, Germany}
\email{rene.schilling@tu-dresden.de}

\subjclass[2010]{60G22, 60H10, 60G15}
\keywords{Harnack inequality, fractional Brownian motion, random time-change, stochastic differential equation}


\maketitle

\begin{abstract}
    We establish Harnack inequalities for stochastic differential equations (SDEs) driven by a time-changed fractional Brownian motion with Hurst parameter $H\in(0,1/2)$. The Harnack inequality is dimension-free if the SDE has a drift which satisfies a one-sided Lipschitz condition; otherwise we still get Harnack-type estimates, but the constants will, in general, depend on the space dimension. Our proof is based on a coupling argument and a regularization argument for the time-change.
\end{abstract}

\section{Introduction}\label{intro}

Throughout this paper, $(\Omega,\mathscr{A},\Pp)$ is a
probability space. Consider the following $d$-dimensional SDE
\begin{equation}\label{intro-e01}
    X_t(x)
    =x+\int_0^tb_s\big(X_s(x)\big)\,\dup s+U_t,
    \quad t\geq 0,\;x\in\real^d,
\end{equation}
where $b:[0,\infty)\times\real^d\to\real^d$ is measurable, locally bounded in the time variable $t\geq 0$ and continuous in the space variable $x\in\real^d$; the driving noise $U=(U_t)_{t\geq 0}$ is a locally bounded measurable process on $\real^d$ starting at zero $U_0=0$. Let us assume, for the time being, that this SDE has a unique non-explosive solution.

In this paper, we want to establish for the solution to the SDE \eqref{intro-e01} a dimension-free Harnack inequality with power, first introduced by Wang \cite{Wan97} for diffusions on Riemannian manifolds, and a log-Harnack inequality, considered in \cite{RW10} for semi-linear SDEs. These
two Harnack-type inequalities have
many applications, for example
when studying the strong Feller property,
heat kernel estimates, contractivity properties,
entropy-cost inequalities, and many more; for an in-depth explanation we refer to the monograph by Wang \cite[Subsection 1.4.1]{Wbook} and the references
given there. Both, the power-Harnack and log-Harnack inequalities have been thoroughly investigated for
various finite- and infinite-dimensional SDEs
and SPDEs driven by Brownian noise; the main tool was a coupling method and the Girsanov transformation, see \cite{Wbook} and the references mentioned there. If the noise is a jump process, it is usually very difficult to construct a successful coupling, and the methods from diffusion processes cannot be directly
applied. One notable exception are driving noises which are subordinate to a diffusion process.

Let $\Sigma:[0,\infty)\to\real^d\otimes\real^d$ be a measurable and locally bounded deterministic function, and assume that $U$ is of the following form:
$$
    U_t=\int_0^t\Sigma_{s-}\,\dup W_{S(s)}+V_t,\quad t\geq 0,
$$
where $W=(W_t)_{t\geq 0}$ is a standard $d$-dimensional Brownian motion, $S=(S(t))_{t\geq 0}$ is a subordinator (i.e.\ a non-decreasing process on $[0,\infty)$ with stationary and independent increments a.k.a.\ increasing L\'evy process) and $V=(V_t)_{t\geq 0}$ is a locally
bounded ($\Bscr[0,\infty)\otimes\mathscr{A}/\Bscr(\real^d)$-)measurable process on $\real^d$ with $V_0=0$; we will, in addition, assume that the processes $W, S$ and $V$ are stochastically independent.

In this setting, Wang \& Wang \cite{WW14} were able to obtain Harnack and log-Harnack inequalities, using an approximation of the subordinator (as in \cite{Zha13}) and a coupling argument. The following assumptions turned out to be crucial: The coefficient $b$ has to satisfy a so-called one-sided Lipschitz condition, i.e.\ there exists a locally bounded measurable function $k:[0,\infty)\to\real$ such that
\begin{equation}\label{intro-e03}
    \langle b_t(x)-b_t(y),x-y\rangle
    \leq k(t)|x-y|^2,\quad x,y\in\real^d,\;t\geq 0;
\tag{\textup{H}}
\end{equation}
moreover, the inverse $\Sigma_t^{-1}$ exists for each $t\geq 0$, and there exists a non-decreasing function $\lambda:[0,\infty)\to[0,\infty)$ such that $\|\Sigma_t^{-1}\|\leq\lambda_t$ for all $t\geq 0$.

The first-named author used in \cite{Den14} the same approximation argument and a gradient estimate approach, in order to improve the Harnack inequalities derived in \cite{WW14}.
Recently, in \cite{WZ15} the approximation argument was also used to establish Harnack-type inequalities for SDEs with non-Lipschitz drift and anisotropic
subordinated Brownian noise, i.e.\ with $U$ having the form
$$
    U_t=\big(W^{(1)}_{S^{(1)}(t)},\dots,W^{(d)}_{S^{(d)}(t)}\big),
    \quad t\geq 0,
$$
where $\big(W^{(1)},\dots,W^{(d)}\big)$ is a standard Brownian motion in $\real^d$, and $\big(S^{(1)},\dots,S^{(d)}\big)$ is an independent $d$-dimensional L\'{e}vy process such that each coordinate process $S^{(i)}$ is a subordinator. Unfortunately, this gives only dimension-dependent Harnack
inequalities. Note that the techniques
of \cite{WW14, Den14, WZ15} do
not really need that the time-change is a subordinator; we may, as we do here, assume that the time-change is any non-decreasing process on $[0,\infty)$ starting from zero and which is independent of the original process.

It is a natural question to ask whether one can still get Harnack-type inequalities if the driving noise $U$ is a more general, maybe non-Markovian, process. As far as we know, Harnack inequalities were established in \cite{Fan13a, Fan13b, Fan14} for SDEs driven by fractional Brownian motions. Inspired by these papers as well as \cite{WW14, Den14}, we will combine general time-change and coupling arguments to obtain Harnack inequalities for SDEs driven by time-changed fractional Brownian motions.

Recall that a fractional Brownian motion $W^H=(W^H_t)_{t\geq 0}$ on $\real^d$ with Hurst parameter $H\in(0,1)$ is a self-similar, mean-zero Gaussian process with stationary increments. The covariance function is given by
\begin{equation}\label{intro-e05}
    \Ee\big(W^{H,(i)}_tW^{H,(j)}_s\big)
    =\frac12\left(t^{2H} +s^{2H}-|t-s|^{2H}\right)\delta_{ij},
    \quad t,s\geq 0,\; 1\leq i,j\leq d,
\end{equation}
($\delta_{ij}$ denotes Kronecker's delta). If $H=1/2$, then $W^H$ is the classical Brownian motion which will be denoted as $W$; if $H\neq1/2$, then $W^H$ does not have independent increments. One can deduce from \eqref{intro-e05} that $W^H$ is self-similar with index $H$, i.e.\ for any constant $c>0$, the processes $(W^H_{ct})_{t\geq 0}$ and $(c^HW^H_{t})_{t\geq 0}$ have the same finite dimensional distributions. Let $Z=(Z(t))_{t\geq 0}$ be a non-decreasing process on $[0,\infty)$ starting from $0$, independent of $W^H$, and introduce the (random) time-changed process $W^H_Z=(W^H_{Z(t)})_{t\geq 0}$. Typically, $Z$ can be a subordinator or the inverse of a subordinator; since inverse subordinators are constant on some random intervals, $W^H_Z$ is sometimes called a `delayed' fractional Brownian motion. We refer to \cite{LS04} for small deviation probabilities of time-changed fractional Brownian motions, while \cite{MNX08, GM14} consider large deviations of fractional Brownian motions delayed by inverse $\alpha$-stable subordinators.

Assume that $U=W^H_Z+V$ where $V$ is a locally bounded
measurable process on $\real^d$ starting
from zero $V_0=0$. In this paper, we restrict ourselves to the case $H\in(0,1/2)$. In order to ensure the existence and uniqueness of the solution to the SDE \eqref{intro-e01} and to construct a successful coupling, we assume that the coefficient $b$ satisfies the one-sided Lipschitz condition \eqref{intro-e03}. As a direct consequence of the log-Harnack inequality, we obtain a gradient estimate for the associated Markov operator.

As in \cite{WZ15}, we can also deal with the anisotropic case, i.e.
$$
    U_t
    =\big(W^{H_1,(1)}_{Z^{(1)}(t)},\dots,W^{H_d,(d)}_{Z^{(d)}(t)}\big) +V_t,\quad t\geq 0,
$$
where, for each $i=1,\dots, d$, $W^{H_i,(i)}=(W^{H_i,(i)}_t)_{t\geq 0}$ is a real-valued fractional Brownian motion with Hurst index $H_i\in(0,1/2)$, $Z^{(i)}=(Z^{(i)}(t))_{t\geq 0}$ is a one-dimensional non-decreasing process such that $Z^{(i)}(0)=0$, and $V=(V_t)_{t\geq 0}$ is a locally bounded measurable process with values in $\real^d$ and $V_0=0$; moreover, we assume that these processes are independent. As in \cite{WZ15}, we replace the Lipschitz condition for the drift coefficient $b$ by a Yamada--Watanabe-type condition; in general, however, this condition cannot be compared with the one-sided Lipschitz condition.

The remaining part of this paper is organized as follows. We collect some basics on fractional Brownian motions in Section \ref{bas}. In Section \ref{sde} we establish the Harnack inequalities for SDEs driven by a time-changed fractional Brownian motion and with drift coefficient satisfying the one-sided Lipschitz condition \eqref{intro-e03}. More explicit expressions in the Harnack and log-Harnack inequalities are obtained if the time-change $Z$ is (the inverse of) a subordinator; this is a consequence of our moment estimates from \cite{DS14}; if $Z$ is the inverse of a subordinator, only the log-Harnack inequality holds, since the exponential moment of $Z(t)^{-\theta}$ is usually infinite for $\theta>0$. The last section is devoted to the case of an anisotropic driving noise; as one would expect from \cite{WZ15}, the Harnack inequalities turn out to be dimension-dependent.

\section{Basics of fractional Brownian motion}\label{bas}

In this section, we recall briefly some basic facts on fractional Brownian motion (fBM) which will be used later on. For further details of fBM and proofs we refer the readers, for instance, to \cite{BHOZ08, DU98} or \cite{nourdin}.

Denote by $\Gamma(\cdot)$, resp., $B(\cdot,\cdot)$, the Euler Gamma and Beta functions, and write $\vphantom{F}_2F_1$ for Gauss' hypergeometric function.
Let $a,b\in\real$ with $a<b$. For $f\in L^1[a,b]$ and $\alpha>0$, the left fractional Riemann-Liouville integral of $f$ of order $\alpha$ on
$(a,b)$ is given by the expression
$$
    I_{a+}^\alpha f(x)
    :=\frac{1}{\Gamma(\alpha)} \int_a^x(x-y)^{\alpha-1}f(y)\,\dup y,
    \quad x\in (a,b).
$$
Let $W^H=(W^H_t)_{t\geq 0}$ be a fractional Brownian motion on $\real^d$ with Hurst parameter $H\in(0,1/2)\cup (1/2,1)$ and define for $0<s<t$ the kernel
\begin{align*}
    \mathcal{K}_H(t,s)
    :=&\frac{1}{\Gamma\left(H+ \tfrac 12\right)}(t-s)^{H- \frac 12} \vphantom{F}_2F_1\left(H-\tfrac12,\tfrac12-H,H+\tfrac12,1-\tfrac ts\right)
\end{align*}
Fix $T>0$. It is known that the operator
$\mathcal{K}_H$, associated with
the kernel $\mathcal{K}_H(\cdot,\cdot)$
$$
    \mathcal{K}_H  f^{(i)}(t)
    := \int_0^t \mathcal{K}_H(t,s)f^{(i)}(s)\,\dup s,
    \quad i=1,\dots,d,
$$
establishes a bijection from $L^2([0,T];\real^d)$
to the space $I_{0+}^{H+1/2}(L^2([0,T];\real^d))$,
see e.g.\ \cite[p.\ 187]{SKM93} or \cite{DU98}.
Moreover, fractional Brownian motion has the following integral representation with respect to a standard $d$-dimensional Brownian motion $W=(W_t)_{t\geq 0}$:
$$
    W^H_t=\int_0^t\mathcal{K}_H(t,s)\,\dup W_s.
$$

In particular, if $0<H<\frac 12$ and $h\in I_{0+}^{H+1/2}(L^2([0,T];\real^d))$ is absolutely continuous, the inverse operator is given by
\begin{equation}\label{bas-e03}
    (\mathcal{K}_H^{-1}h)(s)=s^{H-1/2}I_{0+}^{1/2-H}s^{1/2-H}h'(s),
\end{equation}
cf.\ \cite[Eq.\ (13), p.\ 108]{nua-ouk02}. If $H\in(0,1/2)$, then \eqref{bas-e03} implies
\begin{equation}\label{jg5fhnhg}
    \int_0^T|h(s)|^2\,\dup s<\infty\quad
    \Longrightarrow\quad
    \int_0^\bullet h(s)\,\dup s
    \in I_{0+}^{H+1/2}(L^2([0,T];\real^d)),
\end{equation}
see also \cite[p.\ 108]{nua-ouk02}.

\section{SDEs driven by delayed fractional Brownian motions}\label{sde}

Consider the following SDE on $\real^d$
\begin{equation}\label{sde-e03}
    X_t(x)
    =x+\int_0^tb_s\big(X_s(x)\big)\,\dup s+W^H_{Z(t)} + V_t,\quad t\geq 0,\; x\in\real^d,
\end{equation}
where $b:[0,\infty)\times\real^d\to\real^d$, $(t,x)\mapsto b_t(x)$ is measurable, locally bounded as a function of $t\geq 0$ and continuous in $x$. The processes $W^H=(W^H_t)_{t\geq 0}$, $Z=(Z_t)_{t\geq 0}$ and $V=(V_t)_{t\geq 0}$ are stochastically independent and satisfy
\begin{equation}\label{sde-e04}
\left\{\begin{aligned}
    &\text{$W^H$ is a fBM on $\real^d$ with Hurst parameter $H\in(0,1/2)$;}\\
    &\text{$Z$ is a time-change, i.e.\ a non-decreasing process on $[0,\infty)$ with $Z(0)=0$;}\\
    &\text{$V$ is a locally bounded measurable process on $\real^d$ with $V_0=0$.}
\end{aligned}\right.
\end{equation}
Moreover, we assume that the coefficient $b$ satisfies the one-sided Lipschitz condition \eqref{intro-e03}.

\begin{remark}\label{sde-01}
    The one-sided Lipschitz condition \eqref{intro-e03} ensures, in particular, the existence, uniqueness and non-explosion of the solution to the SDE \eqref{sde-e03}. Indeed, it is well known that the following ordinary differential equation
    $$
        Y_t(x)
        =x + \int_0^t\tilde{b}_s\left(Y_s(x)\right) \,\dup s,\quad t\geq 0,\; x\in\real^d,
    $$
    has a unique solution which does not explode in
    finite time since the coefficient $\tilde{b}$, defined by $\tilde{b}_s(\cdot):=b_s\left(\cdot+W^H_{Z(s)} + V_s\right)$, satisfies the one-sided Lipschitz condition \eqref{intro-e03} with $b$ replaced by $\tilde{b}$; setting $X_t(x):=Y_t(x)+W^H_{Z(t)} + V_t$, we conclude that the the SDE \eqref{sde-e03} has a unique non-explosive solution.
\end{remark}

Throughout this section, we write $|x|:=\big(|x^{(1)}|^2+\dots+|x^{(d)}|^2\big)^{1/2}$ for the Euclidean norm of $x=\big(x^{(1)},\dots,x^{(d)}\big)
\in\real^d$. Set
\begin{equation}\label{sde-e05}
    P_tf(x)
    := \Ee f(X_t(x)),
    \quad t\geq 0,\; f\in\Bscr_b(\real^d), \; x\in\real^d.
\end{equation}

\subsection{Statement of the main result}

In order to state our main result, we need the following notation:
\begin{equation}\label{sde-e11}
    K(t):=\int_0^t k(s)\,\dup s,\quad t\geq 0,
\end{equation}
where $k(s)$ is the constant appearing in \eqref{intro-e03},
\begin{equation}\label{sde-e13}
    \Theta_H
    :=\frac{1}{4(1-H)}\left(\frac{B\left(\frac32-H,\frac12-H\right)}{\Gamma\left(\frac12-H\right)}\right)^2,
\end{equation}
and we denote for any function $f:\real^d\to\real$ the local Lipschitz constant at the point $x$ by
$$
    |\nabla f|(x):=\limsup_{y\to x}
    \frac{|f(y)-f(x)|}{|y-x|}\in [0,\infty],\quad x\in\real^d.
$$

\begin{theorem}\label{sde-02}
    We assume that \eqref{sde-e04} and \eqref{intro-e03} hold for the SDE \eqref{sde-e03} and we denote its unique solution by $X_t(x)$.

\smallskip\noindent\textup{i)} \
        For $T>0$, $x,y\in\real^d$ and any bounded Borel function $f:\real^d \to [1,\infty)$
        $$
            P_T\log f(y)
            \leq
            \log P_Tf(x)+
            \Theta_H\Ee\left[
                \frac{Z(T)^{2-2H}}{\big( \int_0^T\eup^{-K(t)}\,\dup Z(t)\big)^2}
            \right]|x-y|^2.
        $$

\smallskip\noindent\textup{ii)} \
        For $T>0$, $x\in\real^d$ and any bounded Borel function $f:\real^d\to\real$
        $$
            |\nabla P_Tf|^2(x)
            \leq
            \left\{P_Tf^2(x)-\big(P_Tf(x)\big)^2\right\}
            2\Theta_H \Ee\left[
                \frac{Z(T)^{2-2H}}{\big(\int_0^T\eup^{-K(t)}\,\dup Z(t)\big)^2}
            \right].
        $$

\smallskip\noindent\textup{iii)} \
        For $T>0$, $x,y\in\real^d$, $p>1$ and any bounded Borel function $f:\real^d\to [0,\infty)$
        $$
            \big(P_Tf(y)\big)^p
            \leq
            P_Tf^p(x)\cdot
            \left(
                \Ee\exp\left[
                    \frac{\frac{p\Theta_H}{(p-1)^2} Z(T)^{2-2H} |x-y|^2}{\big(\int_0^T\eup^{-K(t)}\,\dup Z(t)\big)^2}
            \right]
            \right)^{p-1}.
        $$
\end{theorem}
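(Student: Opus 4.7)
The plan is to combine a pathwise coupling with the Nualart--Ouknine Girsanov transformation for fractional Brownian motion, after conditioning on the independent pair $(Z, V)$. Since $W^H$ is independent of $(Z, V)$, one works path-by-path; $V$ may be absorbed into the drift $b$ (which preserves \eqref{intro-e03}), so below I treat the case $V = 0$. I first assume that $Z$ is absolutely continuous and strictly increasing; the general case will be recovered at the end by a regularization $Z_\varepsilon \to Z$.

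The key construction is an auxiliary process $Y^*_t$ on the original space with $Y^*_0 = y$, defined by
\[
    \dup Y^*_t = b_t(Y^*_t) \, \dup t - \frac{\eup^{-K(t)} (Y^*_t - X_t)}{\int_t^T \eup^{-K(s)} \, \dup Z(s)} \, \dup Z(t) + \dup W^H_{Z(t)}.
\]
Writing $\xi_t := Y^*_t - X_t$ and $v_t := \int_t^T \eup^{-K(s)} \, \dup Z(s)$, condition \eqref{intro-e03} yields $\dup |\xi_t|^2 \leq 2k(t)|\xi_t|^2 \, \dup t - 2|\xi_t|^2 \eup^{-K(t)} v_t^{-1} \dup Z(t)$. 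Since $\dup v_t = -\eup^{-K(t)} \dup Z(t)$, the substitution $u_t := |\xi_t|^2 \eup^{-2K(t)}$ gives $\dup u_t \leq 2 u_t \, \dup v_t / v_t$, which integrates to $|\xi_t| \leq |y-x| \, \eup^{K(t)} v_t / v_0$; in particular $\xi_T = 0$, so the coupling closes: $Y^*_T = X_T$ almost surely.

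Next I identify the extra drift of $Y^*$ as a Girsanov shift of the underlying Brownian motion $W$ via the representation $W^H_u = \int_0^u \mathcal{K}_H(u, r) \, \dup W_r$. Read in the fBM-time $u = Z(t)$, this extra drift has the form $\int_0^u \Psi(r) \, \dup r$ with the pointwise bound $|\Psi(r)| \leq |y-x|/v_0$ following directly from $|\xi_t| \leq |y-x|\eup^{K(t)}v_t/v_0$. By formula \eqref{bas-e03}, the matching shift of $W$ equals $\mathcal{K}_H^{-1}(\int_0^\bullet \Psi(r)\, \dup r)$, and a Beta-function computation of the fractional integral appearing in \eqref{bas-e03} gives
\[
    \tfrac{1}{2}\,\Big\|\mathcal{K}_H^{-1}\!\Big(\textstyle\int_0^\bullet \Psi(r) \, \dup r\Big)\Big\|_{L^2([0, Z(T)])}^2
    \leq \Theta_H \cdot \frac{|y-x|^2 Z(T)^{2-2H}}{v_0^2},
\]
with $\Theta_H$ as in \eqref{sde-e13}. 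Denoting the resulting Girsanov density by $R_T$, under $Q := R_T \cdot \Pp$ the process $Y^*$ has the same law as $X(y)$ under $\Pp$, and combining with $Y^*_T = X_T$ gives the key identity $P_T f(y) = \Ee_Q f(Y^*_T) = \Ee_P [R_T f(X_T)]$.

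The three assertions then follow by standard Wang-type manipulations applied conditionally on $(Z, V)$. Part (i) follows from the entropy inequality $\Ee_P[R_T \log f(X_T)] \leq \log P_T f(x) + \Ee_P[R_T \log R_T]$, because $\Ee_P[R_T \log R_T \mid Z]$ is exactly the quantity bounded in the display above. Part (iii) follows from H\"older applied to $\Ee_P[R_T f(X_T)]$ together with the conditional moment bound $\Ee[R_T^{p/(p-1)} \mid Z] \leq \exp\bigl(\tfrac{p\Theta_H}{(p-1)^2} Z(T)^{2-2H} |x-y|^2 / v_0^2\bigr)$, obtained by decomposing $R_T^{p/(p-1)}$ as an exponential martingale of mean $1$ times a deterministic factor (the $L^2$-norm of the shift is controlled by a function of $Z$ alone). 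Part (ii) is then standard: apply (i) to $1 + \varepsilon g$ and optimise in $\varepsilon$. Taking outer expectation over $Z$ yields the stated inequalities. The main obstacle I foresee is the regularization step: the Nualart--Ouknine Girsanov framework needs the coupling shift to lie in $I_{0+}^{H+1/2}(L^2)$, which is transparent for absolutely continuous $Z$; for a general non-decreasing $Z$ (for instance a pure-jump subordinator) one has to approximate $Z$ by smooth $Z_\varepsilon$, verify the argument for each $\varepsilon$, and pass to the limit while preserving both the closing of the coupling and the sharp constant $\Theta_H$.
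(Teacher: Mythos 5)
Your plan is sound and follows the paper's broad strategy --- condition on the pair $(Z,V)$, couple, pass through the Nualart--Ouknine Girsanov transformation using \eqref{bas-e03}, invoke the entropy/H\"older bounds, then decondition --- and your Beta-function computation reproduces exactly the constant $\Theta_H$ of \eqref{sde-e13}. The genuine point of departure is the coupling itself. The paper drives the difference $\zeta_t := X^{\ell_\epsilon,v}_t(x)-Y_t$ to zero using the \emph{constant-magnitude} drift $-\xi\,\I_{[0,\tau)}(t)\,\zeta_t/|\zeta_t|\,\dup\ell_\epsilon(t)$ with $\xi=|x-y|/\int_0^T\eup^{-K(r)}\,\dup\ell_\epsilon(r)$, and proves a strict coupling time $\tau\leq T$; you instead use the \emph{linear feedback} $-\eup^{-K(t)}v_t^{-1}(Y_t^*-X_t)\,\dup Z(t)$ with $v_t:=\int_t^T\eup^{-K(s)}\,\dup Z(s)$, deduce $|\xi_t|\leq |x-y|\,\eup^{K(t)}v_t/v_0$, and close the coupling smoothly at $t=T$. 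Both constructions give the same pointwise bound $|x-y|/v_0$ on the shift read in fBM-time, hence the same constant. Your version avoids the off-diagonal Lipschitz argument that the paper needs for the unit-vector field, at the cost of a denominator $v_t\downarrow 0$ whose taming rests precisely on the a priori estimate; a short but explicit argument for well-posedness on $[0,T)$ and continuous extension to $T$ is therefore needed, but this can be supplied.

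What is missing is the second approximation layer, and it is not merely technical bookkeeping. You acknowledge the $Z$-regularization step (replacing $\ell$ by the mollification $\ell_\epsilon$, verifying $\int_0^\bullet g_r\,\dup r\in I_{0+}^{H+1/2}(L^2)$, and letting $\epsilon\downarrow 0$), but you do not address how to pass to the limit $P_T^{\ell_\epsilon,v}f\to P_T^{\ell,v}f$. The one-sided Lipschitz condition \eqref{intro-e03} alone does not yield this: the solutions $X^{\ell_\epsilon,v}$ and $X^{\ell,v}$ ride on \emph{different} noises $W^H_{\ell_\epsilon(\cdot)-\ell_\epsilon(0)}$ and $W^H_{\ell(\cdot)}$, so one cannot bound the norm of their difference by \eqref{intro-e03} as in the coupling computation. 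The paper handles this by a two-step scheme: first assume $b$ is (locally uniformly) globally Lipschitz, so that a Gronwall estimate with $g(\epsilon,t)=\big|W^H_{\ell_\epsilon(t)-\ell_\epsilon(0)}-W^H_{\ell(t)}\big|$ gives $X^{\ell_\epsilon,v}_T(x)\to X^{\ell,v}_T(x)$; then approximate a general one-sided Lipschitz $b$ by the Lipschitz maps $b_t^{(n)}(x):=n\big[(\id-n^{-1}\tilde b_t)^{-1}(x)-x\big]+k(t)x$ with $\tilde b_t(x):=b_t(x)-k(t)x$, invoking \cite{DRW09, WW14} for the Lipschitz continuity of $b^{(n)}$ and the a.s.\ convergence $X^{(n)}_T\to X_T$. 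Without this second approximation, the passage from your absolutely continuous case to general $Z$ and general one-sided Lipschitz $b$ is incomplete.
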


\subsection{Proof of Theorem \ref{sde-02}}

For the proof of Theorem \ref{sde-02}, we need a few preparations. Let $\ell:[0,\infty)\to [0,\infty)$ be a non-decreasing
and c\`{a}dl\`{a}g function with $\ell(0)=0$, and $v:[0,\infty)\to\real$ a locally bounded measurable function with $v(0)=0$. By Remark~\ref{sde-01} the following SDE has a unique non-explosive solution
\begin{equation}\label{sde-e21}
    X_t^{\ell,v}(x)
    = x + \int_0^tb_s\big(X_s^{\ell,v}(x)\big)\,\dup s + W^H_{\ell(t)}+v_t,
    \quad t\geq 0,\; x\in\real^d.
\end{equation}
Set for any bounded Borel function $f:\real^d\to\real$
\begin{equation}\label{sde-e23}
    P_t^{\ell,v}f(x)
    :=\Ee f\big(X_t^{\ell,v}(x)\big),
    \quad t\geq 0,\; x\in\real^d.
\end{equation}

We want to transform the equation \eqref{sde-e21} into an SDE driven by a fractional Brownian motion which will allow us to establish Harnack inequalities using a combination of coupling and the Girsanov transformation, cf.\ \cite{Fan13a, Fan13b, Fan14}. First, however, we have to approximate the (deterministic) time-change $\ell$ by an absolutely continuous function. Consider the following regularization of $\ell$:
$$
    \ell_\epsilon(t):=\frac1\epsilon\int_t^{t+\epsilon}\ell(s)\,\dup s
    +\epsilon t=\int_0^1\ell(\epsilon s+t)\,\dup s+\epsilon t, \quad
    t\geq 0,\,\epsilon\in(0,1).
$$
By construction, for each $\epsilon\in (0,1)$ the function $\ell_\epsilon$ is absolutely continuous, strictly increasing and satisfies for any $t\geq0$
\begin{equation}\label{sde-e25}
    \ell_\epsilon(t)\downarrow\ell(t)
    \quad
    \text{as $\epsilon\downarrow 0$}.
\end{equation}
Let $X_t^{\ell_\epsilon,v}(x)$ be the unique non-explosive solution to the SDE
\begin{equation}\label{sde-e27}
    X_t^{\ell_\epsilon,v}(x)
    =x + \int_0^tb_s\big(X_s^{\ell_\epsilon,v}(x)\big) \,\dup s + W^H_{\ell_\epsilon(t)-\ell_\epsilon(0)} + v_t,
    \quad t\geq 0,\; x\in\real^d,
\end{equation}
and define $P^{\ell_\epsilon,v}$ by \eqref{sde-e23} with $\ell_\epsilon$ instead of $\ell$.

\begin{lemma}\label{sde-12}
    We assume that \eqref{sde-e04} and \eqref{intro-e03} hold for the SDE \eqref{sde-e03} and we denote its unique solution by $X_t(x)$.
    Fix $\epsilon\in (0,1)$ and let $\ell_\epsilon$ and $X_t^{\ell_\epsilon,v}(x)$ be as above.

\smallskip\noindent\textup{i)} \
        For $T>0$, $x,y\in\real^d$ and any bounded Borel function $f:\real^d\to [1,\infty)$
        $$
            P_t^{\ell_\epsilon,v}\log f(y)
            \leq
            \log P_t^{\ell_\epsilon,v}f(x)+
            \frac{\Theta_H [\ell_\epsilon(T)-\ell_\epsilon(0)]^{2-2H}}{\big(\int_0^T\eup^{-K(t)}\,\dup\ell_\epsilon(t)
            \big)^2} |x-y|^2.
        $$

\smallskip\noindent\textup{ii)} \
        For $T>0$, $x,y\in\real^d$, $p>1$ and any bounded Borel function $f:\real^d\to [0,\infty)$
        $$
            \big(P_t^{\ell_\epsilon,v}f(y)\big)^p
            \leq
            P_t^{\ell_\epsilon,v}f^p(x)\cdot
            \exp\left[
                \frac{\frac{p\Theta_H}{p-1} [\ell_\epsilon(T)-\ell_\epsilon(0)]^{2-2H}|x-y|^2}{
                \big(\int_0^T\eup^{-K(t)}\,\dup\ell_\epsilon(t)\big)^2}\right].
        $$
\end{lemma}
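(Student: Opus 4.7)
The strategy is to first reduce the SDE \eqref{sde-e27} to one driven by a genuine fractional Brownian motion via the deterministic time-change $\tau=\sigma(t):=\ell_\epsilon(t)-\ell_\epsilon(0)$. Setting $\tilde T:=\sigma(T)$, $\tilde b_u(\cdot):=b_{\sigma^{-1}(u)}(\cdot)/\dot\sigma(\sigma^{-1}(u))$, $\tilde v_u:=v_{\sigma^{-1}(u)}$ and $\tilde X_\tau:=X_{\sigma^{-1}(\tau)}^{\ell_\epsilon,v}(x)$, equation \eqref{sde-e27} becomes
$$
    \tilde X_\tau = x+\int_0^\tau\tilde b_u(\tilde X_u)\,\dup u+W^H_\tau+\tilde v_\tau,\quad\tau\in[0,\tilde T].
$$
The transformed drift $\tilde b$ inherits \eqref{intro-e03} at the rate $\tilde k(u)=k(\sigma^{-1}(u))/\dot\sigma(\sigma^{-1}(u))$, so that $\tilde K(\tau):=\int_0^\tau\tilde k(u)\,\dup u=K(\sigma^{-1}(\tau))$ and the change of variables yields $\int_0^{\tilde T}\eup^{-\tilde K(u)}\,\dup u=\int_0^T\eup^{-K(t)}\,\dup\ell_\epsilon(t)$; the lemma is therefore reduced to proving a Harnack inequality for a pure-fBM-driven SDE on $[0,\tilde T]$.

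To set up the coupling, on the same probability space I would define $\tilde Y$ starting from $y$ as the solution of the same SDE but with an added drift $\xi_\tau:=\mu\,|x-y|\,\eta_\tau/|\eta_\tau|$ while $\eta:=\tilde X-\tilde Y$ is nonzero (and $\xi\equiv 0$ once $\eta$ hits zero), where $\mu^{-1}:=\int_0^{\tilde T}\eup^{-\tilde K(u)}\,\dup u$. While $|\eta|>0$ the one-sided Lipschitz condition gives the scalar estimate $\tfrac{\dup}{\dup\tau}|\eta_\tau|\leq\tilde k(\tau)|\eta_\tau|-\mu|x-y|$, hence $\eup^{-\tilde K(\tau)}|\eta_\tau|\leq|x-y|\bigl(1-\mu\int_0^\tau\eup^{-\tilde K(u)}\,\dup u\bigr)$; this forces $\eta_{\tilde T}=0$ almost surely, the coupling success that will drive the Harnack estimates. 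Since $|\xi|=\mu|x-y|$ is deterministic, the integrability condition \eqref{jg5fhnhg} makes the Cameron--Martin shift admissible, and the fBM Girsanov theorem (cf.\ \cite{DU98}) furnishes a probability measure $Q\sim\Pp$ with density
$$
    R=\exp\!\left(\int_0^{\tilde T}\!\langle\psi_\tau,\dup W_\tau\rangle-\tfrac12\!\int_0^{\tilde T}\!|\psi_\tau|^2\,\dup\tau\right),\quad \psi:=\mathcal{K}_H^{-1}\!\left(\int_0^\cdot\xi_u\,\dup u\right),
$$
under which $\tilde Y$ has the law of the $\tilde b$-SDE started at $y$; combined with $\tilde X_{\tilde T}=\tilde Y_{\tilde T}$ this identifies $P_T^{\ell_\epsilon,v}f(y)=\Ee[R\,f(\tilde X_{\tilde T})]$.

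The final step is to bound the Girsanov cost. Plugging $|\xi_s|=\mu|x-y|$ into the formula \eqref{bas-e03} and using the Beta integral $\int_0^\tau(\tau-s)^{-1/2-H}s^{1/2-H}\,\dup s=\tau^{1-2H}B(\tfrac32-H,\tfrac12-H)$ yields, almost surely, the deterministic bound
$$
    \int_0^{\tilde T}|\psi_\tau|^2\,\dup\tau\;\leq\; C \;:=\; \frac{2\Theta_H[\ell_\epsilon(T)-\ell_\epsilon(0)]^{2-2H}|x-y|^2}{\bigl(\int_0^T\eup^{-K(t)}\,\dup\ell_\epsilon(t)\bigr)^2}.
$$
Part~i) now follows from the entropy inequality $\Ee[R\log f(\tilde X_{\tilde T})]\leq\log\Ee[f(\tilde X_{\tilde T})]+\Ee[R\log R]$ together with $\Ee[R\log R]=\tfrac12\Ee_Q\!\int_0^{\tilde T}\!|\psi_\tau|^2\,\dup\tau\leq C/2$, and part~ii) from H\"older's inequality with conjugate $q=p/(p-1)$ applied to $\Ee[R\,f(\tilde X_{\tilde T})]$, combined with $\Ee[R^q]\leq\exp\bigl(\tfrac{q(q-1)}{2}C\bigr)$; the latter is valid precisely because $\langle M\rangle_{\tilde T}=\int_0^{\tilde T}|\psi_\tau|^2\,\dup\tau$ is controlled by the deterministic $C$. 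The main obstacle is the design of $\xi$: under a one-sided (rather than two-sided) Lipschitz condition, the vector $\tilde b(\tilde X)-\tilde b(\tilde Y)$ is not controlled by $|\eta|$, so a naive coupling built around this difference would leave a stochastic term inside $\mathcal{K}_H^{-1}$ that cannot be estimated. Taking $\xi$ with deterministic magnitude in the instantaneous direction $\eta/|\eta|$ is precisely what lets the one-sided Lipschitz condition enter only through the scalar Grönwall estimate, while keeping the Cameron--Martin cost expressible in terms of the deterministic $|\xi|$ and hence amenable to the Beta integral above.
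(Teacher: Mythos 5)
Your proposal is correct and follows essentially the same route as the paper: coupling with a drift of deterministic magnitude in the direction $\eta/|\eta|$, a scalar Gr\"onwall estimate via the one-sided Lipschitz condition to force coupling by time $T$, and fractional Girsanov through $\mathcal{K}_H^{-1}$ with the Beta-integral bound followed by the entropy and H\"older inequalities; the only presentational difference is that you reparametrize the SDE to the fBM time scale $\tau=\ell_\epsilon(t)-\ell_\epsilon(0)$ up front, whereas the paper works in the original time variable and reparametrizes only inside the definition of $g_r$. One small slip: the Girsanov density should carry a minus sign, $R=\exp\bigl(-\int_0^{\tilde T}\langle\psi_\tau,\dup W_\tau\rangle-\tfrac12\int_0^{\tilde T}|\psi_\tau|^2\,\dup\tau\bigr)$, so that $W+\int_0^\cdot\psi$ becomes a Brownian motion under $R\Pp$; this does not affect any of the bounds.
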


\begin{proof}
    Fix $T>0$, $x,y\in\real^d$ and denote
    by $(Y_t)_{t\geq 0}$ a solution of the equation
    \begin{equation}\label{sde-e29}
        Y_t
        =
        y + \int_0^tb_s(Y_s)\,\dup s + W^H_{\ell_\epsilon(t)-\ell_\epsilon(0)} + v_t +
        \xi\int_0^t\I_{[0,\tau)}(s) \frac{X_s^{\ell_\epsilon,v}(x)-Y_s} {|X_s^{\ell_\epsilon,v}(x)-Y_s|}\,\dup\ell_\epsilon(s),
    \end{equation}
    where
    $$
        \xi
        :=\frac{|x-y|}{\int_0^T\eup^{-K(r)}\,\dup\ell_\epsilon(r)}
    $$
    and
    $$
        \tau
        :=\inf\big\{t\geq 0\,:\,X_t^{\ell_\epsilon,v}(x)=Y_t\big\}
    $$
    is the coupling time. Since
    $$
        \real^d\times\real^d\ni(z,z')\mapsto\I_{\{z\neq z'\}}\frac{z-z'}{|z-z'|}\in\real^d
    $$
    is locally Lipschitz continuous off the diagonal,
    the system of coupled equations \eqref{sde-e27} and \eqref{sde-e29} has a unique
    solution for $t\in[0,\tau)$.
    If $\tau<\infty$, we set $Y_t=X_t^{\ell_\epsilon,v}(x)$ for all $t\geq\tau$. In this way, we can construct a unique solution $(Y_t)_{t\geq 0}$ to \eqref{sde-e29}.

    Let us show that the coupling time satisfies $\tau\leq T$. Let $t<\tau$, write $\zeta_t$ for the difference of the solutions to the SDEs \eqref{sde-e27} and \eqref{sde-e29}, and observe that $\zeta_t$ admits a classic differential satisfying
    $\dup|\zeta_t|=\I_{\{\zeta_t\neq 0\}}|\zeta_t|^{-1}
    \langle\zeta_t,\dup\zeta_t\rangle$; therefore, \eqref{intro-e03} yields
    \begin{align*}
        &|X_t^{\ell_\epsilon,v}(x)-Y_t|\eup^{-K(t)}\\
        &=|x-y| + \int_0^t\frac{1}{|X_s^{\ell_\epsilon,v}(x)-Y_s|}
        \left\langle X_s^{\ell_\epsilon,v}(x)-Y_s,\, b_s(X_s^{\ell_\epsilon,v}(x)) - b_s(Y_s)\right\rangle \eup^{-K(s)}\,\dup s\\
        &\qquad\mbox{} -\xi\int_0^t\eup^{-K(s)} \,\dup\ell_\epsilon(s) -\int_0^t |X_s^{\ell_\epsilon,v}(x)-Y_s| \, k(s)\eup^{-K(s)}\,\dup s\\
        &\leq|x-y|-\xi\int_0^t \eup^{-K(s)}\,\dup\ell_\epsilon(s).
    \end{align*}
    Now assume that $\tau(\omega)>T$ for some
    $\omega\in\Omega$. Taking $t=T$ in the
    above inequality, we get
    $$
        0<|X_T^{\ell_\epsilon,v}(x,\omega)-Y_T(\omega)|  \eup^{-K(T)}
        \leq|x-y|-|x-y|=0,
    $$
    which is absurd. Therefore, we have $\tau\leq T$ and     $X_T^{\ell_\epsilon,v}(x)=Y_T$.

    Denote by $\gamma_\epsilon : [\ell_\epsilon(0),\infty) \to [0,\infty)$
    the inverse function of $\ell_\epsilon$.
    By definition, $\ell_\epsilon\left(\gamma_\epsilon(t)\right)=t$
    for $t\geq\ell_\epsilon(0)$,
    $\gamma_\epsilon\left(\ell_\epsilon(t)\right)=t$
    for $t\geq 0$, and $t\mapsto\gamma_\epsilon(t)$ is
    absolutely continuous and strictly
    increasing. Let
    $$
        g_r
        :=
        \xi \I_{[0,\ell_\epsilon(\tau)-\ell_\epsilon(0))}(r)
        \frac{X_{\gamma_\epsilon(r+\ell_\epsilon(0))}
        ^{\ell_\epsilon,v}(x)-
        Y_{\gamma_\epsilon(r+\ell_\epsilon(0))}}
        {|X_{\gamma_\epsilon(r+\ell_\epsilon(0))}
        ^{\ell_\epsilon,v}(x)-
        Y_{\gamma_\epsilon(r+\ell_\epsilon(0))}|},
        \quad r\geq 0.
    $$
    A simple calculation shows
    $\int_0^{\ell_\epsilon(T)-\ell_\epsilon(0)}|g_r|^2\,
    \dup r<\infty$, and this, together with
    $H<1/2$ and \eqref{jg5fhnhg}, implies
    that $\int_0^\bullet g_r\,\dup r\in I_{0+}^{H+1/2}(L^2([0,\ell_\epsilon(T)-\ell_\epsilon(0)];
    \real^d))$. Therefore, the following stochastic
    integral defines a martingale
    $$
        M_t
        := -\int_0^{t}\langle \eta_s,\dup W_s\rangle,
        \quad t\geq 0,
    $$
    where $\eta_s:=\mathcal{K}_H^{-1}\left(\int_0^\bullet g_r\,\dup r\right)(s)$, $s\geq 0$, and
    $W=(W_t)_{t\geq 0}$ is a $d$-dimensional
    standard Brownian motion.
    Because of \eqref{bas-e03} we see
    $$
        \mathcal{K}_H^{-1}\left(\int_0^\bullet g_r\,\dup r\right)(s)
        = s^{H-\frac12}I_{0+}^{\frac12-H}s^{\frac12-H}g_s,
    $$
    and this yields for any $s\in[0,\ell_\epsilon(T)-\ell_\epsilon(0)]$
    \begin{align*}
        |\eta_s|
        &= \left|\frac{1}{\Gamma\left(\frac12-H\right)} s^{H-\frac12} \int_0^s r^{\frac12-H}(s-r)^{-H-\frac12}g_r\,\dup r\right|\\
        &\leq \frac{\xi}{\Gamma\left(\frac12-H\right)} s^{H-\frac12} \int_0^s r^{\frac12-H}(s-r)^{-H-\frac12}\,\dup r\\
        &= \frac{B\left(\frac32-H,\frac12-H\right)}{\Gamma\left(\frac12-H\right) \int_0^T\eup^{-K(t)}\,\dup\ell_\epsilon(t)}\,|x-y|\, s^{\frac12-H}\\
        &=: C_{T,H}|x-y|\,s^{\frac12-H}.
    \end{align*}
    Thus, the compensator of the martingale $M$ satisfies
    \begin{equation}\label{sde-e31}
    \begin{aligned}
        \langle M\rangle_{\ell_\epsilon(T)-\ell_\epsilon(0)}
        &=\int_0^{\ell_\epsilon(T)-\ell_\epsilon(0)}|\eta_s|^2\,\dup s\\
        &\leq C_{T,H}^2|x-y|^2\int_0^{\ell_\epsilon(T)-\ell_\epsilon(0)} s^{1-2H}\,\dup s\\
        &=\frac{C_{T,H}^2|x-y|^2}{2(1-H)}\left[\ell_\epsilon(T)-\ell_\epsilon(0)\right]^{2-2H}.
    \end{aligned}
    \end{equation}
    Set
    $$
        R:=\exp\left[M_{\ell_\epsilon(T)-\ell_\epsilon(0)}
        -\frac12\langle M\rangle _{\ell_\epsilon(T)-\ell_\epsilon(0)}\right].
    $$
    Since $\Ee\,\eup^{\frac12\langle M\rangle _{\ell_\epsilon(T)-\ell_\epsilon(0)}}<\infty$,
    one can use Novikov's criterion to get $\Ee R=1$,
    and by Girsanov's theorem, the process
    $$
        \widetilde{W}_t:=W_t+\int_0^t\eta_s\,\dup s,
        \quad 0\leq t\leq \ell_\epsilon(T)
        -\ell_\epsilon(0),
    $$
    is a $d$-dimensional Brownian motion under the new probability measure $R\Pp$. This allows us
    to rewrite \eqref{sde-e27} and \eqref{sde-e29} as
    $$
        X_t^{\ell_\epsilon,v}(x)
        =x + \int_0^tb_s\big(X_s^{\ell_\epsilon,v}(x)\big)\,\dup s + \int_0^{\ell_\epsilon(t)-\ell_\epsilon(0)} \mathcal{K}_H\left(\ell_\epsilon(t)-\ell_\epsilon(0), s\right)\,\dup W_s + v_t
    $$
    and
    $$
        Y_t
        = y + \int_0^tb_s(Y_s)\,\dup s + \int_0^{\ell_\epsilon(t)-\ell_\epsilon(0)} \mathcal{K}_H\left(\ell_\epsilon(t)-\ell_\epsilon(0), s\right)\,\dup \widetilde{W}_s + v_t,
    $$
    respectively. Thus, the distribution of $(X_T^{\ell_\epsilon,v}(y))_{0\leq t\leq T}$ under $\Pp$ coincides with the law of $(Y_t)_{0\leq t\leq T}$ under $R\Pp$; in particular, we get for all bounded Borel functions $f:\real^d\to\real$
    \begin{equation}\label{sde-e33}
        \Ee f\big(X_T^{\ell_\epsilon,v}(y)\big)
        =\Ee_{R\Pp}f(Y_T)
        =\Ee\left[Rf(Y_T)\right]
        =\Ee\big[Rf\big(X_T^{\ell_\epsilon,v}(x)\big)\big].
    \end{equation}
    By the Jensen inequality, we get for any random variable $F\geq1$,
    $$
        \Ee\left[R\log\frac{F}{R}\right]
        =\Ee_{R\Pp}\left[\log\frac{F}{R}\right]
        \leq\log\Ee_{R\Pp}\left[\frac{F}{R}\right]
        =\log\Ee F,
    $$
    hence
    $$
        \Ee\left[R\log F\right]
        \leq\log\Ee F + \Ee\left[R\log R\right].
    $$
    Combining this with \eqref{sde-e33} and the observation that
    \begin{align*}
        \log R
        &= -\int_0^{\ell_\epsilon(T)-\ell_\epsilon(0)}\langle \eta_s,\dup W_s\rangle-\frac12\int_0^{\ell_\epsilon(T)-\ell_\epsilon(0)}|\eta_s|^2\,\dup s\\
        &= -\int_0^{\ell_\epsilon(T)-\ell_\epsilon(0)}\langle \eta_s,\dup \widetilde{W}_s\rangle+\frac12\langle M\rangle_{\ell_\epsilon(T)-\ell_\epsilon(0)}\\
        &\leq-\int_0^{\ell_\epsilon(T)-\ell_\epsilon(0)}\langle \eta_s,\dup \widetilde{W}_s\rangle+\frac{C_{T,H}^2|x-y|^2}{4(1-H)}
        \left[\ell_\epsilon(T)-\ell_\epsilon(0)\right]^{2-2H},
    \end{align*}
    we get for all bounded Borel functions $f:\real^d\to [1,\infty)$ that
    \begin{align*}
        P_T^{\ell_\epsilon,v}\log f(y)
        &= \Ee\log f\big(X_T^{\ell_\epsilon,v}(y)\big)\\
        &= \Ee\big[R\log f\big(X_T^{\ell_\epsilon,v}(x)\big)\big]\\
        &\leq \log\Ee f\big(X_T^{\ell_\epsilon,v}(x)\big)+\Ee [R\log R]\\
        &= \log P_T^{\ell_\epsilon,v}f(x)+\Ee_{R\Pp}\log R\\
        &\leq \log P_T^{\ell_\epsilon,v}f(x)+ \frac{C_{T,H}^2|x-y|^2}{4(1-H)}\left[\ell_\epsilon(T)-\ell_\epsilon(0)\right]^{2-2H}.
    \end{align*}
    This completes the proof of the log-Harnack inequality.

    \medskip
    Let us now prove part ii) of the Lemma. For any bounded Borel function $f:\real^d\to[0,\infty)$ we find with \eqref{sde-e33} and the H\"{o}lder inequality
    \begin{equation}\label{sde-e35}
    \begin{aligned}
        \big(P_T^{\ell_\epsilon,v}f(y)\big)^p
        &=\big(\Ee f\big(X_T^{\ell_\epsilon,v}(y)\big)\big)^p\\
        &=\big(\Ee\big[R f\big(X_T^{\ell_\epsilon,v}(x)\big)\big]\big)^p\\
        &\leq P_T^{\ell_\epsilon,v}f^p(x)\cdot
        \left(\Ee[R^{p/(p-1)}]\right)^{p-1}.
    \end{aligned}
    \end{equation}
    Using \eqref{sde-e31} we get
    \begin{align*}
        R^{p/(p-1)}
        &= \exp\left[\frac{p}{p-1}M_{\ell_\epsilon(T)-\ell_\epsilon(0)}-\frac{p}{2(p-1)}\langle M\rangle_{\ell_\epsilon(T)-\ell_\epsilon(0)}\right]\\
        &= \exp\left[\frac{p}{2(p-1)^2}\langle M\rangle_{\ell_\epsilon(T)-\ell_\epsilon(0)}\right]\\
        &\qquad\qquad\mbox{}\times
           \exp\left[\frac{p}{p-1}M_{\ell_\epsilon(T)-\ell_\epsilon(0)}-\frac{p^2}{2(p-1)^2}\langle M\rangle_{\ell_\epsilon(T)-\ell_\epsilon(0)}\right]\\
        &\leq \exp\left[\frac{pC_{T,H}^2|x-y|^2}{4(p-1)^2(1-H)}\left[\ell_\epsilon(T)-\ell_\epsilon(0)\right]^{2-2H}\right]\\
        &\qquad\qquad\mbox{}\times
              \exp\left[\frac{p}{p-1}M_{\ell_\epsilon(T)-\ell_\epsilon(0)}-\frac{p^2}{2(p-1)^2}\langle M\rangle_{\ell_\epsilon(T)-\ell_\epsilon(0)}\right].
    \end{align*}
    Noting the fact that
    $\exp\left[\frac{p}{p-1} M_{\ell_\epsilon(t)-\ell_\epsilon(0)}- \frac{p^2}{2(p-1)^2}\langle M\rangle_{\ell_\epsilon(t)-\ell_\epsilon(0)}\right]$, $0\leq t\leq T$, is a martingale with mean $1$ -- this is due to Novikov's criterion -- we get
    $$
        \Ee\left[R^{p/(p-1)}\right]
        \leq
        \exp\left[\frac{pC_{T,H}^2|x-y|^2}{4(p-1)^2(1-H)}\left[\ell_\epsilon(T)-\ell_\epsilon(0)\right]^{2-2H}\right].
    $$
    Inserting this expression into \eqref{sde-e35}, completes the proof of the power-Harnack inequality.
\end{proof}

\begin{proof}[Proof of Theorem \ref{sde-02}]
    By \cite[Proposition 2.3]{ATW14}, ii) is a direct
    consequence of i).

    Fix $T>0$. By a standard approximation argument, it is enough to prove the formulae in i) and iii) for $f\in C_b(\real^d)$.

    \emph{Step 1:} Assume that $b_t:\real^d\to\real^d$ is, uniformly for $t$ in compact intervals, a global Lipschitz function, i.e.\ for any
    $t>0$ there is some $C_t>0$ such that
    \begin{equation}\label{sde-e41}
        |b_s(x)-b_s(y)| \leq C_t|x-y|,
        \quad 0\leq s\leq t,\; x,y\in\real^d.
    \end{equation}
    This implies that for all $x\in\real^d$ and $\epsilon\in(0,1)$
    \begin{align*}
        \big|X_T^{\ell_\epsilon,v}(x)-X_T^{\ell,v}(x)\big|
        &\leq
            \int_0^T    \big|b_s\big(X_s^{\ell_\epsilon,v}(x)\big) - b_s\big(X_s^{\ell,v}(x)\big)\big|\,\dup s
            +\big|W^H_{\ell_\epsilon(T)-\ell_\epsilon(0)} -W^H_{\ell(T)}\big|\\
        &\leq
            C_T\int_0^T \big|X_s^{\ell_\epsilon,v}(x)-X_s^{\ell,v}(x)\big|\,\dup s
            +\big|W^H_{\ell_\epsilon(T)-\ell_\epsilon(0)} - W^H_{\ell(T)}\big|.
    \end{align*}
    Since $X^{\ell_\epsilon,v}_t(x)$ and $X^{\ell,v}_t(x)$ are non-explosive, the integral in the above expression is finite. Therefore, we can apply Gronwall's inequality with $g(\epsilon,t):=\big|W^H_{\ell_\epsilon(t)-\ell_\epsilon(0)} -W^H_{\ell(t)}\big|$ and find
    $$
        \big|X_T^{\ell_\epsilon,v}(x)-X_T^{\ell,v}(x)\big|
        \leq
        g(\epsilon,T)+ C_T\int_0^T g(\epsilon,s) \eup^{(T-s)C_T}\, \dup s.
    $$
    From \eqref{sde-e25}, we conclude that $\lim_{\epsilon\downarrow 0}g(\epsilon,s)=0$ for all $s\geq0$. Using the dominated convergence
    theorem, we obtain
    $$
        \lim_{\epsilon\downarrow 0}X_T^{\ell_\epsilon,v}(x)
        =X_T^{\ell,v}(x),\quad x\in\real^d;
    $$
    hence,
    $$
        \lim_{\epsilon\downarrow 0}P_T^{\ell_\epsilon,v}f
        =P_T^{\ell,v}f,\quad f\in C_b(\real^d).
    $$
    Since $\ell$ is of bounded variation,
    the limit $\ell_\epsilon\downarrow\ell$ also
    holds for the integrals
    $$
        \lim_{\epsilon\downarrow 0} \int_0^T\eup^{-K(t)}\,\dup\ell_\epsilon(t)
        = \int_0^T\eup^{-K(t)}\,\dup\ell(t).
    $$
    We can now use Lemma \ref{sde-12} i) and let $\epsilon\downarrow 0$ to get
    $$
            P_T^{\ell,v}\log f(y)
            \leq \log P_T^{\ell,v}f(x) + \frac{\Theta_H \ell(T)^{2-2H}}{\big(\int_0^T\eup^{-K(t)}\,\dup\ell(t)\big)^2}\,|x-y|^2
    $$
    for $x,y\in\real^d$ and all $f\in C_b(\real^d)$ with $f\geq1$. Similarly, Lemma \ref{sde-12} ii) yields
    $$
            \big(P_T^{\ell,v}f(y)\big)^p
            \leq P_T^{\ell,v}f^p(x)\cdot
            \exp\left[\frac{\frac{p}{p-1} \Theta_H \ell(T)^{2-2H}}{\big(\int_0^T\eup^{-K(t)}\,\dup\ell(t)\big)^2}\,|x-y|^2\right]
    $$
    for $x,y\in\real^d$ and all non-negative $f\in C_b(\real^d)$.

    Since the processes $W$, $V$ and $Z$ are independent,
    $
        P_Tf
        =\Ee\left[P_T^{\ell,v}f(\cdot)\left|_{\begin{subarray}{c}\ell=Z\\ v=V\end{subarray}}\right.
        \right]
    $
    holds for all bounded Borel functions $f:\real^d\to\real$. Thus, the Jensen inequality yields for all $x,y\in\real^d$ and $f\in C_b(\real^d)$ with $f\geq1$
    \begin{align*}
        P_T\log f(y)
        &= \Ee\left[P_T^{\ell,v}\log f(y)\left|_{\begin{subarray}{c}\ell=Z\\ v=V\end{subarray}}\right.\right]\\
        &\leq \Ee\left[\log P_T^{\ell,v} f(x)\left|_{\begin{subarray}{c}\ell=Z\\ v=V\end{subarray}}\right.\right]
        + \Theta_H \Ee\left[ \frac{Z(T)^{2-2H}}{\big(\int_0^T\eup^{-K(t)}\,\dup Z(t)\big)^2}\right]|x-y|^2\\
        &\leq \log P_Tf(x) + \Theta_H \Ee\left[ \frac{Z(T)^{2-2H}}{\big(\int_0^T\eup^{-K(t)}\,\dup Z(t)\big)^2}\right]|x-y|^2.
    \end{align*}
    For the power-Harnack inequality we use H\"{o}lder's inequality to find for all $x,y\in\real^d$ and non-negative $f\in C_b(\real^d)$
    \begin{align*}
        P_Tf(y)
        &= \Ee\left[P_T^{\ell,v}f(y)\left|_{\begin{subarray}{c}\ell=Z\\ v=V\end{subarray}}\right.\right]\\
        &\leq
        \Ee\left[\left.
            \big(P_T^{\ell,v}f^p(x)\big)^{\frac 1p}
            \exp\left[ \frac{\frac{\ell(T)^{2-2H}}{p-1}\Theta_H |x-y|^2}{\big(\int_0^T\eup^{-K(t)}\,\dup\ell(t)\big)^2} \right]
        \right|_{\begin{subarray}{c}\ell=Z\\ v=V\end{subarray}}\right]\\
        &\leq
        (P_Tf^p(x)\big)^{\frac 1p}
        \left(\Ee\exp\left[\frac{\frac{pZ(T)^{2-2H}}{(p-1)^2}\Theta_H |x-y|^2}{\big(\int_0^T\eup^{-K(t)}\,\dup Z(t)\big)^2}\right]\right)^{1-\frac 1p}.
    \end{align*}

\medskip\emph{Step 2:} For the general case, we use the approximation argument proposed in \cite[part (c) of proof of Theorem 2.1]{WW14}.
    Let
    $$
        \tilde{b}_t(x):=b_t(x)-k(t)x,
        \quad t\geq 0,\; x\in\real^d.
    $$
    ($k(t)$ is the constant appearing in
    \eqref{intro-e03} on p.~\pageref{intro-e03}.) Using \eqref{intro-e03}, it is not difficult to see that the mapping $\id-n^{-1}\tilde{b}_t:\real^d\to\real^d$ is injective for any $n\in\nat$ and $t\geq 0$. The maps
    $$
        b_t^{(n)}(x)
        :=n\left[\left(\id-n^{-1}\tilde{b}_t\right)^{-1}(x)-x\right]+k(t)x,
        \quad n\in\nat,\; t\geq 0,\; x\in\real^d,
    $$
    are, uniformly for $t$ in compact intervals, globally Lipschitz continuous, see \cite{DRW09}. Denote by $(X^{(n)}_t(x))_{t\geq 0}$ the solution of \eqref{sde-e03} with $b$ replaced by $b^{(n)}$, and define $P_t^{(n)}$ by \eqref{sde-e05} with $X_t(x)$ replaced by $X^{(n)}_t(x)$. Because of the first part of the proof, the statements of Theorem \ref{sde-02} hold with $P_T$ replaced by $P_T^{(n)}$.

    On the other hand, we see as in \cite[part (c) of proof of Theorem 2.1]{WW14}, that
    $$
        \lim_{n\to\infty}X_T^{(n)}(x)=X_T(x)\,
        \text{\ a.s.,}\quad \text{hence},\quad
        \lim_{n\to\infty}P_T^{(n)}f =P_Tf
        \quad\text{for all $f\in C_b(\real^d)$}.
    $$
    Therefore, the claim follows if we let $n\to\infty$.
\end{proof}

\subsection{Applications}

Let $Z=(Z(t))_{t\geq 0}$ be a
subordinator (without killing), i.e.\ a nondecreasing
L\'{e}vy process on $[0,\infty)$ with $S_0=0$; its
Laplace transform is of the form
$$
    \Ee\,\eup^{-rZ(t)}
    = \eup^{-t\phi(r)},\quad r>0,\; t\geq 0,
$$
and the characteristic (Laplace) exponent $\phi:(0,\infty)\rightarrow(0,\infty)$
is a Bernstein function with $\phi(0+)=0$. Recall that a Bernstein function is a smooth function $\phi\in C^\infty((0,\infty))$ such that $(-1)^{n-1}\phi^{(n)}\geq0$
for all $n=1,2,\dots$; it is well known,
see e.g.\ \cite[Theorem 3.2]{SSV}, that every
Bernstein function enjoys a unique L\'{e}vy--Khintchine
representation
$$
    \phi(r)
    =\phi(0+) + \vartheta r + \int_{(0,\infty)}\left(1-\eup^{-rx}\right)\,\nu(\dup x),
    \quad r>0,
$$
where $\vartheta\geq 0$ is the drift parameter and $\nu$ is a L\'{e}vy measure, that is, a measure on $(0,\infty)$ satisfying $\int_{(0,\infty)}(1\wedge x)\,\nu(\dup x)<\infty$.

For the constant $k(t)$ from \eqref{intro-e03} and its primitive $K(t)$, cf.\ \eqref{sde-e11}, we set
$$
    K^*(T):=\exp\left[2\sup_{t\in[0,T]}K(t)\right],\quad T>0.
$$
Obviously, if $k(t)\leq 0$ for all $t\geq 0$, then $K^*(T)\leq1$ for all $T>0$.

\begin{corollary}\label{sde-52}
    Let $Z$ be a subordinator whose characteristic exponent is the Bernstein function $\phi$ and assume that \eqref{intro-e03} and \eqref{sde-e04} hold. We have for all $T>0$, $x,y\in\real^d$ and all bounded Borel functions $f:\real^d\to[0,\infty)$ the following assertions:

\smallskip\noindent\textup{i)} \
        If $\liminf_{r\to\infty}\phi(r)r^{-\rho}>0$ for some
        $\rho>0$, then
        \begin{gather*}
            P_T\log f(y)
            \leq
            \log P_Tf(x) + \frac{C_{H,\rho}K^*(T)|x-y|^2}{(T\wedge1)^{2H/\rho}},
            \quad f\geq 1,\\
            |\nabla P_Tf|^2(x)
            \leq
            \left\{P_Tf^2(x)-\big(P_Tf(x)\big)^2\right\}
            \frac{C_{H,\rho}K^*(T)}{(T\wedge1)^{2H/\rho}}.
        \end{gather*}
        If, in addition,
        $\liminf_{r\downarrow 0}\phi(r)r^{-\rho}>0$,
        then we can replace $T\wedge 1$ by $T$ and get
        \begin{gather*}
            P_T\log f(y)
            \leq
            \log P_Tf(x) + \frac{C_{H,\rho}K^*(T)|x-y|^2}{T^{2H/\rho}},
            \quad f\geq 1,\\
            |\nabla P_Tf|^2(x)
            \leq
            \left\{P_Tf^2(x)-\big(P_Tf(x)\big)^2\right\}
            \frac{C_{H,\rho}K^*(T)}{T^{2H/\rho}}.
        \end{gather*}

\smallskip\noindent\textup{ii)} \
        If $\liminf_{r\to\infty}\phi(r)r^{-\rho}>0$ for some $\rho>2H/(1+2H)$ and $p>1$, then
        \begin{align*}
            &\big(P_Tf(y)\big)^p\leq
            P_Tf^p(x)\\
            &\quad\times
            \exp\left[
            \frac{C_{H,\rho}\,p}{p-1}
            K^*(T)
            \left(1+\frac{1}{T^{2H/\rho}}
            \right)|x-y|^2
            +C_{H,\rho}
            \frac{\left(pK^*(T)|x-y|^2\right)^
            {\frac{\rho}{\rho-2H(1-\rho)}}
            }
            {(p-1)^{\frac{\rho+2H(1-\rho)}
            {\rho-2H(1-\rho)}}
            T^{\frac{2H}{\rho-2H(1-\rho)}}}
            \right].
        \end{align*}
        If, in addition,
        $\liminf_{r\downarrow 0}\phi(r)r^{-\rho}>0$, then
        \begin{align*}
            &\big(P_Tf(y)\big)^p\leq
            P_Tf^p(x)\\
            &\qquad\qquad\qquad\quad\times
            \exp\left[
            \frac{C_{H,\rho}\,p}{p-1}
            \frac{K^*(T)
            |x-y|^2}{T^{2H/\rho}}
            +C_{H,\rho}
            \frac{\left(pK^*(T)|x-y|^2\right)^
            {\frac{\rho}{\rho-2H(1-\rho)}}
            }
            {(p-1)^{\frac{\rho+2H(1-\rho)}
            {\rho-2H(1-\rho)}}
            T^{\frac{2H}{\rho-2H(1-\rho)}}}
            \right].
        \end{align*}
\end{corollary}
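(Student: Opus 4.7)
The plan is to reduce the random expectations appearing in Theorem~\ref{sde-02} to functionals of $Z(T)$ alone, after which we can invoke the moment estimates for subordinators from \cite{DS14}. Since $K$ is continuous on $[0,T]$, the elementary bound $\eup^{-K(t)} \geq K^*(T)^{-1/2}$ for $t \in [0,T]$ yields the pathwise inequality
\[
\int_0^T \eup^{-K(t)}\,\dup Z(t) \;\geq\; K^*(T)^{-1/2}\,Z(T),
\]
so that
\[
\frac{Z(T)^{2-2H}}{\bigl(\int_0^T \eup^{-K(t)}\,\dup Z(t)\bigr)^{2}} \;\leq\; K^*(T)\,Z(T)^{-2H}.
\]
Substituting this into the three inequalities of Theorem~\ref{sde-02} reduces part~(i) to an estimate on $\Ee[Z(T)^{-2H}]$ and part~(ii) to an estimate on $\Ee\exp\bigl(\alpha Z(T)^{-2H}\bigr)$ with $\alpha = p\Theta_H K^*(T)|x-y|^2/(p-1)^2$.

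For the negative moments I would start from the Laplace-transform identity
\[
\Ee\bigl[Z(T)^{-\theta}\bigr] \;=\; \frac{1}{\Gamma(\theta)}\int_0^\infty r^{\theta-1}\eup^{-T\phi(r)}\,\dup r, \quad \theta > 0.
\]
Under $\liminf_{r\to\infty}\phi(r)r^{-\rho} > 0$, one picks $c, r_0 > 0$ with $\phi(r) \geq cr^\rho$ for $r \geq r_0$, splits the integral at $r_0$, and substitutes $u = Tcr^\rho$ in the tail, obtaining $\Ee[Z(T)^{-\theta}] \leq C_{H,\rho}(T \wedge 1)^{-\theta/\rho}$. Under the stronger condition at zero, the bound $\phi(r) \geq cr^\rho$ holds globally, a single substitution suffices, and the truncation $T \wedge 1$ may be replaced by $T$. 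Choosing $\theta = 2H$ and feeding this into the log-Harnack and gradient estimate of Theorem~\ref{sde-02} immediately produces both parts of~(i).

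For part~(ii) the central task is to bound the exponential moment of $Z(T)^{-2H}$. The natural route is to convert $\phi(\lambda) \geq c\lambda^\rho$ into a small-ball estimate via Chernoff,
\[
\Pp\bigl(Z(T) < s\bigr) \;\leq\; \eup^{\lambda s - T\phi(\lambda)}, \quad \lambda > 0,
\]
and minimizing in $\lambda$ yields the Weibull-type tail $\Pp(Z(T) < s) \leq \exp\bigl[-c_{*}T^{1/(1-\rho)}s^{-\rho/(1-\rho)}\bigr]$ for small $s$. Splitting $\Ee\exp(\alpha Z(T)^{-2H})$ at a threshold $\tau$—the piece on $\{Z(T) > \tau\}$ is bounded trivially by $\exp(\alpha \tau^{-2H})$ and, after choosing $\tau$ of order $(T\wedge 1)^{1/\rho}$ in line with the moment bound of~(i), produces the linear-in-$|x-y|^2$ term together with the $(1 + T^{-2H/\rho})$ prefactor—and applying Laplace's method to the integral over $\{Z(T) \leq \tau\}$ at the saddle point $s_{*}^{\rho/(1-\rho)-2H} \propto T^{1/(1-\rho)}/\alpha$ yields the super-linear term. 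The condition $\rho > 2H/(1+2H)$ is exactly $\rho/(1-\rho) > 2H$, which is precisely what makes the saddle point well defined and the exponential moment finite; the resulting exponents $\rho/[\rho - 2H(1-\rho)]$ and $T^{-2H/[\rho-2H(1-\rho)]}$ match those in the statement. The main obstacle I foresee is the saddle-point bookkeeping: extracting the exact two-term structure of the exponent, together with the correct $p$-dependence after taking the $(p-1)$-st power in Theorem~\ref{sde-02}(iii), requires care but no new ideas beyond standard small-ball techniques for subordinators.
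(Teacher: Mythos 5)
Your proposal gets the key structural step exactly right and matches the paper: the pathwise bound $\eup^{-K(t)} \geq K^*(T)^{-1/2}$ on $[0,T]$ gives $\int_0^T \eup^{-K(t)}\,\dup Z(t) \geq K^*(T)^{-1/2} Z(T)$, hence $Z(T)^{2-2H}\big(\int_0^T \eup^{-K(t)}\,\dup Z(t)\big)^{-2} \leq K^*(T)\,Z(T)^{-2H}$, which is precisely the paper's inequality \eqref{sde-e57} and reduces Theorem~\ref{sde-02} to moment estimates for $Z(T)^{-2H}$ and $\exp(\alpha Z(T)^{-2H})$.

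Where you diverge is the second half: the paper disposes of both moment estimates in one stroke by citing \cite[Theorem 3.8\,(a) and (b)]{DS14}, whereas you propose to reprove them from scratch. Your route for the negative moment---the Gamma-integral identity $\Ee[Z(T)^{-\theta}] = \Gamma(\theta)^{-1}\int_0^\infty r^{\theta-1}\eup^{-T\phi(r)}\,\dup r$, split at $r_0$ and substituted in the tail---is sound and is in fact essentially the proof in \cite{DS14}. For the exponential moment, your Chernoff/Weibull small-ball estimate $\Pp(Z(T)<s) \leq \exp[-c_* T^{1/(1-\rho)}s^{-\rho/(1-\rho)}]$, followed by a split at a threshold $\tau\asymp(T\wedge1)^{1/\rho}$ and a saddle-point estimate of the remaining integral, does recover the correct two-term structure: the saddle point indeed sits at $s_*^{\rho/(1-\rho)-2H}\propto T^{1/(1-\rho)}/\alpha$, the value of the exponent there scales as $\alpha^{\rho/(\rho-2H(1-\rho))}T^{-2H/(\rho-2H(1-\rho))}$, and with $\alpha = p\Theta_H K^*(T)|x-y|^2/(p-1)^2$ and the outer $(p-1)$-th power one gets exactly $(p-1)^{-\frac{\rho+2H(1-\rho)}{\rho-2H(1-\rho)}}$. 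You also correctly identify $\rho>2H/(1+2H)$ as the condition $\rho/(1-\rho)>2H$ making the small-ball decay dominate and kill the boundary term at $0$. So your argument is correct and more self-contained; what it costs is having to carry out the saddle-point bookkeeping that \cite{DS14} already packages, and what it buys is independence from that reference. Either way, the gradient bounds in part (i) then follow, as in the paper, from the log-Harnack inequality via \cite[Proposition 2.3]{ATW14}, already built into Theorem~\ref{sde-02}\,ii); you do not re-mention this step, but it requires no new work.
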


\begin{proof}
    Since we have
    \begin{equation}\label{sde-e57}
        Z(T)^{2-2H}\left(\int_0^T\eup^{-K(t)}\,\dup Z(t)\right)^{-2}
        \leq
        K^*(T) Z(T)^{-2H},\quad T>0,
    \end{equation}
    the assertion follows from Theorem \ref{sde-02} and \cite[Theorem 3.8\,(a) and (b)]{DS14}.
\end{proof}

We will now assume that the subordinator $S$ is strictly increasing, i.e.\ we have $\nu(0,\infty)=\infty$ or $\vartheta>0$. Define the (generalized, right-continuous) inverse of $S$
$$
    S^{-1}(t)
    :=\inf\{s\geq 0\,:\,S(s)> t\}
    =\sup\{s\geq 0\,:\,S(s)\leq t\},\quad t\geq 0.
$$
We will call $S^{-1}=(S^{-1}(t))_{t\geq 0}$ an inverse subordinator associated with the Bernstein function $\phi$. Since we assume that the subordinator
$S$ is strictly increasing, we know that almost all paths of $S^{-1}$ are continuous and non-decreasing. We will frequently use the following identity:
\begin{equation}\label{sde-e59}
    \Pp\left(S(r)\geq t\right)
    =\Pp\left(S^{-1}(t)\leq r\right),
    \quad r,t>0.
\end{equation}

\begin{corollary}\label{sde-54}
    Let $Z$ be an inverse subordinator associated with the Bernstein function $\phi$ and assume that
    \eqref{intro-e03} and \eqref{sde-e04} hold.

    If $\limsup_{r\downarrow 0}\phi(r)r^{-\sigma}<\infty$ and $\limsup_{r\to\infty}\phi(r) r^{-\sigma}<\infty$ for some $\sigma>0$, then the following assertions hold.

    \smallskip\noindent\textup{i)} \
        For any $T>0$, $x,y\in\real^d$ and all bounded Borel functions $f:\real^d\to [1,\infty)$
        $$
            P_T\log f(y)
            \leq\log P_Tf(x) + \frac{C_{H,\sigma}K^*(T)}{T^{2H\sigma}} |x-y|^2.
        $$

    \smallskip\noindent\textup{ii)} \
        For any $T>0$, $x\in\real^d$ and all bounded Borel functions $f:\real^d\to [0,\infty)$
        $$
            |\nabla P_Tf|^2(x)
            \leq
            \left\{P_Tf^2(x)-\big(P_Tf(x)\big)^2\right\}\frac{C_{H,\sigma}K^*(T)}{T^{2H\sigma}}.
        $$
\end{corollary}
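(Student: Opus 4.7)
The plan is to combine Theorem \ref{sde-02} with the pointwise estimate \eqref{sde-e57} and then reduce the whole corollary to a negative-moment estimate for the inverse subordinator $Z$. Specifically, inserting
\[
    \frac{Z(T)^{2-2H}}{\bigl(\int_0^T\eup^{-K(t)}\,\dup Z(t)\bigr)^2} \leq K^*(T)\,Z(T)^{-2H}
\]
into the expectation appearing in Theorem \ref{sde-02}\,i), the log-Harnack bound becomes
\[
    P_T\log f(y) \leq \log P_Tf(x) + \Theta_H\,K^*(T)\,\Ee[Z(T)^{-2H}]\,|x-y|^2,
\]
and part ii) will follow from part i) via \cite[Proposition 2.3]{ATW14}. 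Hence everything is reduced to proving $\Ee[Z(T)^{-2H}] \leq C_{H,\sigma}\,T^{-2H\sigma}$ for all $T>0$.

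To bound this negative moment, I would use the layer-cake formula together with the identity \eqref{sde-e59}, which gives $\Pp(Z(T) \leq r) = \Pp(S(r) \geq T)$, so that
\[
    \Ee[Z(T)^{-2H}] = 2H\int_0^\infty r^{-2H-1}\,\Pp(S(r)\geq T)\,\dup r.
\]
The tail probability can be controlled by the elementary estimate $\I_{\{S(r)\geq T\}} \leq (1-\eup^{-1})^{-1}(1-\eup^{-S(r)/T})$, which, combined with the Laplace transform $\Ee\,\eup^{-S(r)/T} = \eup^{-r\phi(1/T)}$ and the bound $1-\eup^{-x}\leq \min\{1,x\}$, yields
\[
    \Pp(S(r) \geq T) \leq \frac{1}{1-\eup^{-1}}\,\min\bigl\{1,\,r\,\phi(1/T)\bigr\}.
\]
Splitting the layer-cake integral at $r=\phi(1/T)^{-1}$ and using $H<1/2$ then gives $\Ee[Z(T)^{-2H}] \leq C_H\,\phi(1/T)^{2H}$, with a constant depending only on $H$.

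Finally, to translate this into a $T^{-2H\sigma}$ bound, I would observe that the two-sided assumption on $\phi$, together with continuity of $\phi$ on $(0,\infty)$, implies $\phi(r) \leq C_\sigma\, r^\sigma$ for \emph{all} $r>0$, whence $\phi(1/T)^{2H} \leq C_\sigma^{2H}\, T^{-2H\sigma}$. Plugging this back into the reduction above yields i), and then ii) follows as explained. The main obstacle to anticipate is ensuring that a single constant $C_{H,\sigma}$ works uniformly in $T\in(0,\infty)$: this is precisely why both the $r\downarrow 0$ and $r\to\infty$ asymptotics are imposed, the former handling large $T$ and the latter handling small $T$, which also explains the absence of any factor $T\wedge 1$ in the statement in contrast to Corollary \ref{sde-52}. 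In practice, the required negative-moment estimate for inverse subordinators is developed as a standalone result in the authors' earlier paper \cite{DS14} and may be invoked directly.
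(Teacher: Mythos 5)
Your proposal is correct and follows the same high-level strategy as the paper: apply the pointwise bound \eqref{sde-e57} under the expectation in Theorem \ref{sde-02}\,i)--ii), thereby reducing matters to the negative-moment estimate $\Ee\big[Z(T)^{-2H}\big]\leq C_{H,\sigma}\,T^{-2H\sigma}$, and then prove this by a layer-cake integral combined with the identity \eqref{sde-e59} and the uniform bound $\phi(r)\leq C_\sigma r^\sigma$ (which, as you note, follows from the two one-sided $\limsup$ conditions plus continuity of $\phi$ on compacts). The only genuine difference is in how the tail probability $\Pp(S(r)\geq T)$ is controlled. You use the Chernoff-type indicator bound $\I_{\{S(r)\geq T\}}\leq(1-\eup^{-1})^{-1}(1-\eup^{-S(r)/T})$ followed by the subordinator's Laplace transform and $1-\eup^{-x}\leq\min\{1,x\}$; the paper (in Lemma~\ref{sde-58}) instead uses $\I_{[T,\infty)}(S(r))\leq 2S(r)/(T+S(r))$ together with the integral representation $x/(1+x)=\int_0^\infty(1-\eup^{-xr})\eup^{-r}\,\dup r$. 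Both routes yield $\Pp(S(r)\geq T)\lesssim \min\{1,\,r\,T^{-\sigma}\}$ and then the split of the layer-cake integral at $r\asymp\phi(1/T)^{-1}$ (which converges because $2H<1$). Your variant is, if anything, slightly more direct.

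One small inaccuracy in your closing remark: the negative-moment estimate for \emph{inverse} subordinators is not taken from \cite{DS14} but is proved as Lemma~\ref{sde-58} in the present paper; the reference to \cite[Theorem 3.8]{DS14} is used in the subordinator case (Corollary~\ref{sde-52}), not here. Since you supply a self-contained argument anyway, this does not affect the correctness of your proof.
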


Corollary \ref{sde-54} follows, if we combine \eqref{sde-e57} with Theorem \ref{sde-02} i), ii) and the next lemma.
\begin{lemma}\label{sde-58}
    Let $S^{-1}$ be an inverse subordinator with Bernstein function $\phi$ satisfying the conditions of Corollary \ref{sde-54}. For any $\theta\in(0,1)$,
    $$
        \Ee\left[\left(S^{-1}(t)\right)^{-\theta}\right]
        \leq C_{\sigma,\theta}\,t^{-\sigma\theta},\quad t>0.
    $$
\end{lemma}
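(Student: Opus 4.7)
The plan is to combine the duality \eqref{sde-e59} with a layer-cake representation of the negative moment. For a strictly positive random variable $X$ and $\theta\in(0,1)$, the change of variables $s=r^{-\theta}$ in the standard tail formula $\Ee[X^{-\theta}]=\int_0^\infty\Pp(X^{-\theta}>s)\,\dup s$ gives
$$
    \Ee\bigl[(S^{-1}(t))^{-\theta}\bigr]
    = \theta\int_0^\infty r^{-\theta-1}\,\Pp(S^{-1}(t)<r)\,\dup r
    \leq \theta\int_0^\infty r^{-\theta-1}\,\Pp(S(r)\geq t)\,\dup r,
$$
where the inequality uses \eqref{sde-e59} (and $S^{-1}(t)>0$ a.s.\ for $t>0$, since $S(0)=0$). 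This reduces everything to bounding the upper tail of $S(r)$.

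The key technical step is to control $\Pp(S(r)\geq t)$. Since subordinators typically lack positive exponential moments, the Chernoff bound based on $\eup^{\lambda S(r)}$ is not available. I would use instead the elementary inequality
$\I_{[t,\infty)}(x)\leq (1-\eup^{-1})^{-1}\bigl(1-\eup^{-x/t}\bigr)$ valid for all $x\geq 0$, together with the Laplace transform $\Ee\,\eup^{-\lambda S(r)}=\eup^{-r\phi(\lambda)}$, to obtain
$$
    \Pp(S(r)\geq t)\leq (1-\eup^{-1})^{-1}\bigl(1-\eup^{-r\phi(1/t)}\bigr)\leq \min\!\bigl\{1,\ (1-\eup^{-1})^{-1}\,r\,\phi(1/t)\bigr\},
$$
where the last step uses $1-\eup^{-u}\leq u$. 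Splitting the integral at the natural scale $r_0:=1/\phi(1/t)$ and applying the linear bound on $(0,r_0)$ and the trivial bound on $(r_0,\infty)$ then produces
$$
    \Ee\bigl[(S^{-1}(t))^{-\theta}\bigr]\leq C_\theta\,\phi(1/t)^{\theta}
$$
after a routine calculation.

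To finish, observe that the two growth hypotheses on $\phi$, together with the fact that $\phi$ is continuous and positive on $(0,\infty)$, force $\phi(r)/r^{\sigma}$ to be bounded on $(0,\infty)$: it is continuous on every compact subinterval and bounded at both endpoints by assumption. Hence $\phi(r)\leq C_{\sigma}\,r^{\sigma}$ uniformly for $r>0$, and specialising to $r=1/t$ yields $\phi(1/t)^{\theta}\leq C_{\sigma,\theta}\,t^{-\sigma\theta}$, which is the required estimate. The only mildly delicate point is the absence of positive exponential moments for $S$; once one accepts the indicator inequality above as the natural substitute, the remainder is a standard splitting-of-integrals argument.
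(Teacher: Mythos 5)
Your proof is correct and follows essentially the same route as the paper: bound $\Pp(S(r)\geq t)$ by an elementary estimate on the indicator function that reduces to the Laplace transform $\eup^{-r\phi(\cdot)}$, deduce $\phi(r)\leq C_\sigma r^\sigma$ on all of $(0,\infty)$ from the two limsup hypotheses and continuity, and finish with a layer-cake plus splitting computation. The only material difference is the choice of indicator bound: you use $\I_{[t,\infty)}(x)\leq(1-\eup^{-1})^{-1}(1-\eup^{-x/t})$, which evaluates $\phi$ only at the single point $1/t$, whereas the paper uses $\I_{[t,\infty)}(x)\leq 2x/(t+x)$ together with the representation $x/(1+x)=\int_0^\infty(1-\eup^{-xr})\eup^{-r}\,\dup r$ and thus integrates $\phi(r/t)$ against $\eup^{-r}\,\dup r$ (picking up a $\Gamma(\sigma+1)$ constant); this is a cosmetic variant of the same idea.
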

\begin{proof}
        By our assumption, there exists a constant $c=c(\sigma)>0$ such that
        $
            \phi(r)\leq c\,r^\sigma
        $
        for all $r>0$. Combining this with
        $$
            \I_{[t,\infty)}\left(S(s)\right)\leq
            \frac{2S(s)}{t+S(s)},\qquad
            \frac{x}{1+x}=\int_0^\infty\left(1-\eup^{-xr}\right)
            \eup^{-r}\,\dup r,\quad x>0,
        $$
        and Tonelli's theorem, we get that for all $s,t>0$
        \begin{align*}
            \Pp\left(S(s)\geq t\right)
            &= \Ee\left[\I_{[t,\infty)}\left(S(s)\right)\right]\\
            &\leq 2\Ee\left[\frac{S(s)/t}{1+S(s)/t}\right]\\
            &= 2\Ee\left[\int_0^\infty\left(1-\eup^{-rS(s)/t}\right)\eup^{-r}\,\dup r\right]\\
            &= 2\int_0^\infty\left(1-\eup^{-s\phi(r/t)}\right)\eup^{-r}\,\dup r\\
            &\leq 2s\int_0^\infty\phi\left(\frac rt\right)\eup^{-r}\,\dup r\\
            &\leq 2c\,s\int_0^\infty\left(\frac rt\right)^\sigma\eup^{-r}\,\dup r\\
            &= 2c\,\Gamma(\sigma+1)st^{-\sigma}.
        \end{align*}
        This yields for all $t>0$
        \begin{align*}
            \Ee\big[\big(S^{-1}(t)\big)^{-\theta}\big]
            &= \int_0^\infty\Pp\big(S(s^{-1/\theta})\geq t\big)\,\dup s\\
            &\leq \int_0^\infty\left(1\wedge\left[2c\,\Gamma(\sigma+1)s^{-1/\theta}t^{-\sigma}\right]\right)\,\dup s\\
            &= \frac{1}{1-\theta}\left[2c\,\Gamma(\sigma+1)\right]^\theta t^{-\sigma\theta}.
        \qedhere
        \end{align*}
\end{proof}

\begin{remark}\label{sde-60}
    Let $\alpha\in (0,1)$. For an $\alpha$-stable
    subordinator $S$, the corresponding
    Bernstein function
    $\phi(r)=r^\alpha$ satisfies the conditions
    of Corollary \ref{sde-54} with $\sigma=\alpha$.
    Because of \eqref{sde-e59} and the well
    known two-sided estimate
    $$
        \Pp\left(S(r)\geq t\right)
        \asymp 1\wedge\left[rt^{-\alpha}
        \right],\quad r,t>0,
    $$
    ($f\asymp g$ means that $c^{-1}f(t)\leq g(t)\leq c f(t)$ for some $c\geq 1$ and all $t$) we have for any $t>0$
    \begin{equation}\label{sde-e63}
    \begin{aligned}
            \Ee\big[\big(S^{-1}(t)\big)^{-\theta}\big]
            &=\int_0^\infty\Pp\big(\big(S^{-1}(t)\big)^{-\theta}\geq s\big)\,\dup s\\
            &=\int_0^\infty\Pp\big(S(s^{-1/\theta})\geq t\big)\,\dup s\\
            &\asymp\int_0^\infty\left(1\wedge\left[s^{-1/\theta}t^{-\alpha}\right]\right)\,\dup s\\
            &=\frac{1}{1-\theta}\,t^{-\alpha\theta}.
    \end{aligned}
    \end{equation}
    This shows that Lemma \ref{sde-58} is sharp for $\alpha$-stable subordinators.
\end{remark}

\begin{remark}\label{sde-62}
    Let $Z$ be an inverse $\alpha$-stable subordinator, i.e.\ $Z(t)=S^{-1}(t)$ for $t\geq 0$, where $(S(t))_{t\geq 0}$ is an $\alpha$-stable subordinator. For any $t,\theta,\delta>0$ we have
    $$
        \Ee\exp\left[\frac{\delta}{Z(t)^\theta}\right]=\infty.
    $$
    The proof is similar to \eqref{sde-e63}:
    \begin{align*}
        \Ee\exp\left[\frac{\delta}{Z(t)^\theta}\right]
        &\geq \int_1^\infty\Pp\left(\exp\left[\delta \left(S^{-1}(t)\right)^{-\theta}\right]\geq s\right)\,\dup s\\
        &= \int_1^\infty\Pp\left(S\left(\left(\delta/\log s\right)^{1/\theta}\right)\geq t\right)\,\dup s\\
        &\asymp \int_1^\infty\left(1\wedge\left[\left(\delta/\log s\right)^{1/\theta}t^{-\alpha}\right]\right)\,\dup s\\
        &=\infty.
    \end{align*}

    This means that we cannot expect, in the setting of Corollary \ref{sde-54}, to get a power-Harnack inequality as we did in Corollary \ref{sde-52}~iii).
\end{remark}

\section{SDEs with non-Lipschitz drift and anisotropic noise}\label{non-lip}

Let $W^{H_i,(i)}=(W^{H_i,(i)}_t)_{t\geq 0}$, $Z^{(i)}=(Z^{(i)}(t))_{t\geq 0}$, $1\leq i\leq d$, $V=(V_t)_{t\geq 0}$ be $2d+1$ independent stochastic processes such that
\begin{equation}\label{nl-e03}
\left\{
\begin{aligned}
    &\text{$W^{H_i,(i)}$ are fBMs on $\real$ with Hurst parameter $H_i\in(0,1/2)$};\\
    &\text{$Z^{(i)}$ are non-decreasing processes on $[0,\infty)$ with $Z^{(i)}(0)=0$};\\
    &\text{$V$ is a locally bounded measurable
    process on $\real^d$ with $V_0=0$}.
\end{aligned}\right.
\end{equation}
We consider the following stochastic equation on $\real^d$:
\begin{equation}\label{nl-e05}
    X_t(x)
    = x+\int_0^tb_s\big(X_s(x)\big)\,\dup s + \big(W^{H_1,(1)}_{Z^{(1)}(t)},\dots, W^{H_d,(d)}_{Z^{(d)}(t)}\big) + V_t,\quad t\geq 0,\,x\in\real^d,
\end{equation}
where $b=(b^{(1)},\dots,b^{(d)}):[0,\infty)\times\real^d\to\real^d$, $b=b_t(x)$, is measurable, locally bounded in the variable $t\geq 0$ and continuous as a function of $x$. By $\mathscr{U}$ we denote the family of functions $u:(0,\infty)\to(0,\infty)$ which are continuous, non-decreasing, grow at most  linearly as $x\to\infty$ and satisfy $\int_{0+}\frac{\dup s}{u(s)}=\infty$. Typical examples of such functions are $u(s)=s$, $u(s)=s\log(\eup\vee s^{-1})$, $u(s)=s\cdot\{\log(\eup\vee s^{-1})\}\cdot \log\log(\eup^\eup\vee s^{-1})$.

In this section, we will use the $\ell^1$-norm on $\real^d$ which we denote by $\|x\|_1:= |x^{(1)}|+\dots+|x^{(d)}|$, $x\in\real^d$, and we replace the one-sided Lipschitz condition \eqref{intro-e03} by the
following Yamada--Watanabe-type condition
\begin{equation}\tag{A}\label{nl-e07}
\left\{
\begin{aligned}
    &\text{There exists some $u\in\mathscr{U}$ and a locally bounded}\\
    &\text{measurable function $k:[0,\infty)\to[0,\infty)$ such that}\\
    &\|b_t(x)-b_t(y)\|_1
        \leq k(t) u(\|x-y\|_1),
        \; t\geq 0,\: x,y\in\real^d.
\end{aligned}\right.
\end{equation}
As in Section 3, it is easy to see that \eqref{nl-e07} guarantees the existence, uniqueness and non-explosion of the
solution to \eqref{nl-e05}. We define for bounded Borel
functions $f:\real^d\to\real$ the operator
$$
    P_tf(x):=\Ee f(X_t(x))\,\quad
    t\geq 0,\;x\in\real^d.
$$
\begin{remark}
    Note that it is, in general, difficult to compare \eqref{nl-e07} with the condition \eqref{intro-e03} used in Section~\ref{sde}, since neither of them implies the other one.
\end{remark}

\subsection{Statement of the main result}

Let $k(t)$ be the constant appearing in \eqref{nl-e07}, and denote by $K(t)=\int_0^t k(s)\,\dup s$ its primitive. 
For $i\in\{1,\dots,d\}$ we define $\Theta_{H_i}$ by \eqref{sde-e13} with $H_i$ instead of $H$. Finally, we set for $u\in\mathscr{U}$ and $k(t)$
\begin{gather*}
    G_u(r):=
    \begin{cases}
        -\int_r^1\frac{\dup s}{u(s)},&\text{if\ \ }r\in[0,1),\\
        \int_1^r\frac{\dup s}{u(s)},&\text{if\ \ }r\in[1,\infty),
    \end{cases}
\end{gather*}
and
$$
    \Phi_{u,k}(t,r)
    := r + \int_0^t k(s)\,u\circ G_u^{-1} \big(G_u(r)+K(s)\big)\,\dup s,
    \quad t,r\geq 0;
$$
$G_u^{-1}$ is the inverse function of $G_u$. Since $u\in\mathscr{U}$, it is easy to see that $G_u$ is strictly increasing with $G_u(0)=-\infty$ and $\lim_{r\uparrow\infty}G_u(r)=\infty$, so that $\Phi_{u,k}$ is well-defined. If, in particular, $u(s)=cs$ for some constant $c>0$, then
$$
    \Phi_{u,k}(t,r)
    =\left(1+c\int_0^t k(s)\,\eup^{cK(s)}\,\dup s\right)r,
    \quad t,r\geq 0.
$$
Since we use the $\ell^1$-norm in this section, the local Lipschitz constant of a function $f$ on $\real^d$ at the point $x\in\real^d$ is defined by
$$
    |\nabla f|(x)
    :=\limsup_{y\to x}\frac{|f(y)-f(x)|}{\|y-x\|_1}.
$$

\begin{theorem}\label{nl-02}
    Assuming \eqref{nl-e03} and \eqref{nl-e07},
    let $X_t(x)$ denote the unique solution
    to the SDE \eqref{nl-e05}.

    \smallskip\noindent\textup{i)}
        For $T>0$, $x,y\in\real^d$, and any bounded Borel function $f:\real^d\to [1,\infty)$
        $$
            P_T\log f(y)
            \leq \log P_Tf(x) + \Phi_{u,k}^2\left(T,\|x-y\|_1\right) \sum_{i=1}^d \Ee\left[\frac{\Theta_{H_i}}{(Z^{(i)}(T))^{2H_i}}\right].
        $$

    \smallskip\noindent\textup{ii)}
        For $T>0$, $x,y\in\real^d$, $p>1$ and any bounded Borel function $f:\real^d\to [0,\infty)$
        $$
            \big(P_Tf(y)\big)^p
            \leq
            P_Tf^p(x)\cdot
            \left(\Ee\exp\left[\frac{p}{(p-1)^2} \Phi_{u,k}^2\left(T,\|x-y\|_1\right) \sum_{i=1}^d\frac{\Theta_{H_i}}{(Z^{(i)}(T))^{2H_i}}\right]\right)^{p-1}.
        $$

    \smallskip\noindent\textup{iii)}
        If \eqref{nl-e07} holds with $u(s)=cs$ for some constant $c>0$, then we have for $T>0$, $x\in\real^d$ and any bounded Borel function $f:\real^d\to\real$
        \begin{align*}
            |\nabla P_Tf|^2(x)
            &\leq\left\{P_Tf^2(x)-\big(P_Tf(x)\big)^2\right\}\\
            &\qquad\qquad\mbox{}\times 2 \left(1+c\int_0^T k(s)\,\eup^{cK(s)}\,\dup s\right)^2
            \sum_{i=1}^d\Ee\left[\frac{\Theta_{H_i}}{(Z^{(i)}(T))^{2H_i}}\right].
        \end{align*}
\end{theorem}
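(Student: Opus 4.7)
My plan mirrors the architecture of the proof of Theorem~\ref{sde-02} -- regularize the time-changes, couple, apply Girsanov, then integrate and send $\epsilon\downarrow 0$ -- with the one-sided Lipschitz control of Section~\ref{sde} replaced by the ODE comparison induced by \eqref{nl-e07}, and with a coordinate-wise coupling that exploits the independence of the fBMs $W^{H_i,(i)}$. First fix $T>0$, condition on $(\boldsymbol Z, V)$ by freezing them at deterministic $(\boldsymbol\ell, v)$, and approximate each $\ell^{(i)}$ by its absolutely continuous, strictly increasing smoothing $\ell^{(i)}_\epsilon$. The main task is to prove deterministic-time-change analogues of i) and ii) with $Z^{(i)}(T)$ replaced by $\ell^{(i)}_\epsilon(T)-\ell^{(i)}_\epsilon(0)$. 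Integrating in $(\boldsymbol Z, V)$ via Jensen's inequality and the independence assumption, and sending $\epsilon\downarrow 0$ by bounded convergence as in Step~1 of the proof of Theorem~\ref{sde-02}, then delivers the statement.

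The coupling acts coordinate-wise. Let $(Y_t)$ solve the SDE obtained from $X^{\boldsymbol\ell_\epsilon,v}_t(x)$ by starting at $y$ and adding the drift
$-\xi^{(i)}\I_{[0,\tau)}(t)\,\mathrm{sgn}\bigl(Y_t^{(i)}-X_t^{(i),\boldsymbol\ell_\epsilon,v}(x)\bigr)\,\dup\ell^{(i)}_\epsilon(t)$
on coordinate $i$, stopped at $\tau := \inf\{t\ge 0: Y_t = X^{\boldsymbol\ell_\epsilon,v}_t(x)\}$. Since the fBM parts cancel in $X-Y$, each coordinate-difference has finite variation, so coordinate-wise $\dup|X^{(i)}-Y^{(i)}| = \mathrm{sgn}(X^{(i)}-Y^{(i)})\,\dup(X^{(i)}-Y^{(i)})$; summing over $i$ and using \eqref{nl-e07} produces
$$
    \dup h_t \leq k(t)\,u(h_t)\,\dup t - \sum_{i=1}^d \xi^{(i)}\,\dup\ell^{(i)}_\epsilon(t) \quad \text{on}\ \{t<\tau\},
$$
where $h_t := \|X^{\boldsymbol\ell_\epsilon,v}_t(x)-Y_t\|_1$. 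Monotonicity of $u$ and comparison with the scalar ODE $\phi' = k(t)u(\phi)$, $\phi(0)=\|x-y\|_1$ -- whose solution is $\phi(t) = G_u^{-1}(G_u(\|x-y\|_1)+K(t))$ with $\phi(T)=\Phi_{u,k}(T,\|x-y\|_1)$ -- give $h_t \leq \phi(t) - \sum_i \xi^{(i)}[\ell^{(i)}_\epsilon(t)-\ell^{(i)}_\epsilon(0)]$. The choice
$$
    \xi^{(i)} := \frac{\Phi_{u,k}(T,\|x-y\|_1)}{\ell^{(i)}_\epsilon(T)-\ell^{(i)}_\epsilon(0)}
$$
then forces $\tau\leq T$.

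For the Girsanov step, since $W^{H_i,(i)}$ is generated by an independent scalar Brownian motion $W^{(i)}$ via the kernel $\mathcal K_{H_i}$, the coordinate-wise coupling drift on the $i$-th equation is removable by a Girsanov shift on $W^{(i)}$. The scalar calculation of Lemma~\ref{sde-12} based on \eqref{bas-e03} produces a martingale $M^{(i)}$ with
$
\langle M^{(i)}\rangle_{\ell^{(i)}_\epsilon(T)-\ell^{(i)}_\epsilon(0)} \leq 2\Theta_{H_i}(\xi^{(i)})^2 [\ell^{(i)}_\epsilon(T)-\ell^{(i)}_\epsilon(0)]^{2-2H_i}
$
(the $\eup^{-K}$ weight of Lemma~\ref{sde-12} disappears because our $\xi^{(i)}$ is constant in $t$); by orthogonality of the $M^{(i)}$, the aggregate $M := \sum_i M^{(i)}$ satisfies
$$
    \langle M\rangle \leq 2\,\Phi_{u,k}^2(T,\|x-y\|_1) \sum_{i=1}^d \frac{\Theta_{H_i}}{[\ell^{(i)}_\epsilon(T)-\ell^{(i)}_\epsilon(0)]^{2H_i}},
$$
which is precisely the coefficient that, after integration in $(\boldsymbol Z, V)$ and passage to $\epsilon\downarrow 0$, appears in i) and (up to the $p/(p-1)^2$-factor) in ii). Jensen under $R\Pp = \eup^{M-\frac12\langle M\rangle}\Pp$ yields i), Novikov's estimate of $\Ee R^{p/(p-1)}$ yields ii), and the linear case $u(s)=cs$ of iii) -- where $\Phi_{u,k}(T,r)$ is linear in $r$ -- converts the resulting quadratic log-Harnack inequality into the gradient estimate via \cite[Proposition~2.3]{ATW14}.

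The main obstacle I anticipate is the preparatory regularization of $b$: the coupled SDE must be strongly well-posed for the coupling to make sense, but the Yosida scheme of Step~2 in the proof of Theorem~\ref{sde-02} rests on the convexity inherent in \eqref{intro-e03} and does not transfer to \eqref{nl-e07}. Following \cite[Theorem~2.1]{WZ15}, I would instead approximate $b$ by Lipschitz drifts $b^{(n)}$ satisfying \eqref{nl-e07} with a common $u$ and $k$ via mollification in $x$, establish the Harnack bounds for the $b^{(n)}$-solutions by the above argument, and then pass $n\to\infty$ using the pathwise uniqueness implied by \eqref{nl-e07}.
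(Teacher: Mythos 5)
Your proposal reproduces the paper's overall architecture faithfully: freeze $(Z,V)$, regularize each $\ell^{(i)}$, couple coordinate-wise, control the drift term by Bihari's inequality, apply Girsanov coordinate-wise (the $M^{(i)}$ are orthogonal because the scalar Brownian motions $W^{(i)}$ generating the $W^{H_i,(i)}$ are independent), and integrate out the time-change using Jensen/H\"older. The computation of $\langle M\rangle$, the $p/(p-1)^2$-factor via Novikov, and the deduction of iii) from i) via \cite[Prop.\ 2.3]{ATW14} all match the paper. Your worry about the Yosida step is actually moot: unlike Theorem~\ref{sde-02}, the paper does not regularize $b$ for Theorem~\ref{nl-02} -- condition \eqref{nl-e07} gives pathwise uniqueness directly, and the coupling SDE is well posed because the unit-drift is locally Lipschitz off the coupling set.

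There is, however, a genuine gap in your coupling argument. You define a single stopping time $\tau := \inf\{t: Y_t = X_t^{\ell_\epsilon,v}(x)\}$ and keep every drift term $\xi^{(i)}\mathrm{sgn}(\cdot)\,\dup\ell_\epsilon^{(i)}$ alive until $\tau$. This breaks in two places. First, before the common coupling time $\tau$ the $i$-th coordinates may coincide (and then decouple again, since the $b$-drift of $Y^{(i)}$ feels the other coordinates); on the set where $X^{(i)}=Y^{(i)}$ the sign drift is undefined, and in any case the contribution to $\dup|X^{(i)}-Y^{(i)}|$ is $-\xi^{(i)}\I_{\{X^{(i)}\neq Y^{(i)}\}}\dup\ell_\epsilon^{(i)}$, not $-\xi^{(i)}\dup\ell_\epsilon^{(i)}$. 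Hence your claimed inequality $h_t\le\phi(t)-\sum_i\xi^{(i)}[\ell_\epsilon^{(i)}(t)-\ell_\epsilon^{(i)}(0)]$ is not correct -- the right-hand side would be negative already for $d\ge 2$, a sign that it overcounts the removal. The paper avoids this by introducing coordinate-wise coupling times $\tau_i$, switching off the $i$-th drift at $\tau_i$, and patching $Y^{(i)}_t:=X^{(i),\ell_\epsilon,v}_t(x)$ for $t\ge\tau_i$; the drift subtraction then reads $\sum_i\xi^{(i)}[\ell_\epsilon^{(i)}(t\wedge\tau_i)-\ell_\epsilon^{(i)}(0)]$, and the enforced equality after $\tau_i$ makes the SDE well-posed and guarantees $Y^{(i)}_T=X^{(i)}_T$ once $\tau_i\le T$.

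Second, even with coordinate-wise $\tau_i$ in place, your choice $\xi^{(i)}=\Phi_{u,k}(T,\|x-y\|_1)/[\ell_\epsilon^{(i)}(T)-\ell_\epsilon^{(i)}(0)]$ yields only $\sum_{i}\frac{\ell_\epsilon^{(i)}(T\wedge\tau_i)-\ell_\epsilon^{(i)}(0)}{\ell_\epsilon^{(i)}(T)-\ell_\epsilon^{(i)}(0)}\le 1$, from which one cannot read off $\tau_n\le T$ directly -- if some $\tau_n>T$ the $n$-th summand equals $1$, so the inequality forces every other summand to vanish, and one must then invoke $h_T>0$ to finish, which is delicate when $x^{(i)}=y^{(i)}$ for $i\ne n$. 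The paper sidesteps this by replacing $T$ by $\delta T$ in the denominator of $\xi^{(i)}$, $\delta\in(0,1)$: the $n$-th summand is then strictly $>1$ as soon as $\tau_n>\delta T$, giving an immediate contradiction, and $\delta\uparrow 1$ at the end produces the stated constant. You should adopt both fixes -- coordinate-wise $\tau_i$ with post-$\tau_i$ patching, and the $\delta<1$ rescaling of $\xi^{(i)}$ followed by $\delta\uparrow1$ -- to make the coupling argument watertight.
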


\subsection{Deterministic time-changes}

The proof of Theorem~\ref{nl-02} uses the same strategy as the proof of Theorem~\ref{sde-02}. Because of the independence of the random time-change and the driving processes, we consider first a deterministic time-change $\ell=(\ell^{(1)},\dots,\ell^{(d)}):[0,\infty) \to [0,\infty)^d$ such that
for each $i\in\{1,\dots,d\}$ the map $t\mapsto\ell^{(i)}(t)$ is non-decreasing and c\`{a}dl\`{a}g with $\ell^{(i)}(0)=0$. Let $v=(v^{(1)},\dots,v^{(d)}):[0,\infty)\to\real^d$ be a locally bounded measurable function such that $v_0=0$. Under \eqref{nl-e07}, the following
SDE has a unique non-explosive strong solution
\begin{equation}\label{nl-e11}
    X_t^{\ell,v}(x)=x+\int_0^tb_s\big(X_s^{\ell,v}(x)\big)\,\dup s
    +\big(W^{H_1,(1)}_{\ell^{(1)}(t)},\dots,
    W^{H_d,(d)}_{\ell^{(d)}(t)}\big)
    +v_t,\quad t\geq 0,\,x\in\real^d.
\end{equation}
As before, we set for any bounded Borel function $f:\real^d\to\real$
$$
    P_t^{\ell,v}f(x)
    := \Ee f\big(X_t^{\ell,v}(x)\big)\,\quad
    t\geq 0,\; x\in\real^d.
$$

\begin{proposition}\label{nl-04}
    Assuming \eqref{nl-e03} and \eqref{nl-e07}, let $X^{\ell,v}(x)$ denote the unique solution to the SDE \eqref{nl-e11}.

    \smallskip\noindent\textup{i)}
        For $T>0$, $x,y\in\real^d$ and any bounded Borel function $f:\real^d\to[1,\infty)$
        $$
            P_T^{\ell,v}\log f(y)
            \leq \log P_T^{\ell,v}f(x) + \Phi_{u,k}^2\left(T,\|x-y\|_1\right) \sum_{i=1}^d \frac{\Theta_{H_i}}{\left(\ell^{(i)}(T)\right)^{2H_i}}.
        $$

    \smallskip\noindent\textup{ii)}
        For $T>0$, $p>1$, $x,y\in\real^d$ and any bounded Borel function $f:\real^d\to[0,\infty)$
        $$
            \big(P_T^{\ell,v}f(y)\big)^p
            \leq P_T^{\ell,v}f^p(x)\cdot
            \exp\left[\frac{p}{p-1} \Phi_{u,k}^2\left(T,\|x-y\|_1\right)\sum_{i=1}^d \frac{\Theta_{H_i}}{\left(\ell^{(i)}(T)\right)^{2H_i}}\right].
        $$
\end{proposition}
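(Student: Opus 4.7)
The plan follows the template of Lemma \ref{sde-12}, adapted to the anisotropic Yamada--Watanabe setting by a coordinate-wise decomposition that exploits the joint independence of the fBMs $W^{H_i,(i)}$ and their independent driving Brownian motions $W^{(i)}$. As in the proof of Theorem \ref{sde-02}, I first regularise each deterministic time-change by the absolutely continuous, strictly increasing approximant $\ell^{(i)}_\epsilon(t):=\epsilon^{-1}\int_t^{t+\epsilon}\ell^{(i)}(s)\,\dup s+\epsilon t$, prove the inequalities with $\ell_\epsilon:=(\ell^{(1)}_\epsilon,\dots,\ell^{(d)}_\epsilon)$ in place of $\ell$, and then let $\epsilon\downarrow 0$ using $\ell^{(i)}_\epsilon\downarrow\ell^{(i)}$ together with the locally Lipschitz drift approximation from Step~2 of the proof of Theorem \ref{sde-02}.

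Writing $\Phi:=\Phi_{u,k}(T,\|x-y\|_1)$, I set $\xi_i:=\Phi/[\ell^{(i)}_\epsilon(T)-\ell^{(i)}_\epsilon(0)]$ and construct the coupling $Y$ starting at $y$, driven by the same fBMs as $X^{\ell_\epsilon,v}(x)$ but with the additional coordinate-wise drift
$$
\sum_{i=1}^d\xi_i\,\I_{[0,\tau_i)}(s)\,\mathrm{sgn}\bigl(X^{(i),\ell_\epsilon,v}_s(x)-Y^{(i)}_s\bigr)\,e_i\,\dup\ell^{(i)}_\epsilon(s),
$$
where $\{e_i\}$ denotes the canonical basis of $\real^d$ and $\tau_i$ is the coord-wise coupling time; after $\tau_i$ the $i$-th coordinate is kept coupled to $X^{(i),\ell_\epsilon,v}(x)$ (coord-wise ``sticky'' coupling). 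For $\zeta:=X^{\ell_\epsilon,v}(x)-Y$, a classical Lipschitz-calculus computation on $[0,\tau_i)$ combined with \eqref{nl-e07} yields
$$
\|\zeta_t\|_1\leq\|x-y\|_1+\int_0^t k(s)\,u(\|\zeta_s\|_1)\,\dup s-\sum_{i=1}^d\xi_i\bigl[\ell^{(i)}_\epsilon(t\wedge\tau_i)-\ell^{(i)}_\epsilon(0)\bigr].
$$
Bihari's inequality, using that $u\in\mathscr U$ is non-decreasing and the identity $\Phi_{u,k}(t,r)=r+\int_0^t k(s)\,u\!\circ\!\Phi_{u,k}(s,r)\,\dup s$, bounds the first two terms above by $\Phi_{u,k}(t,\|x-y\|_1)$. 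Setting $\tau:=\max_i\tau_i$, if $\tau>T$ then some $\tau_i>T$, so $\xi_i[\ell^{(i)}_\epsilon(T)-\ell^{(i)}_\epsilon(0)]=\Phi$ already forces $\|\zeta_T\|_1\leq\Phi-\Phi=0$, a contradiction; hence $\tau\leq T$.

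The Girsanov step is now performed coordinate by coordinate. For each $i$ let
$$
g^{(i)}_r:=\xi_i\,\I_{[0,\ell^{(i)}_\epsilon(\tau_i)-\ell^{(i)}_\epsilon(0))}(r)\,\mathrm{sgn}\bigl(\zeta^{(i)}_{\gamma^{(i)}_\epsilon(r+\ell^{(i)}_\epsilon(0))}\bigr),\quad
\eta^{(i)}_s:=\mathcal K_{H_i}^{-1}\!\Bigl(\int_0^\bullet g^{(i)}_r\,\dup r\Bigr)(s),\quad
M^{(i)}_t:=-\int_0^t\langle\eta^{(i)}_s,\dup W^{(i)}_s\rangle,
$$
with $\gamma^{(i)}_\epsilon$ the inverse of $\ell^{(i)}_\epsilon$. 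The pointwise bound on $|\eta^{(i)}_s|$ from the proof of Lemma \ref{sde-12} gives
$$
\tfrac12\langle M^{(i)}\rangle_{\ell^{(i)}_\epsilon(T)-\ell^{(i)}_\epsilon(0)}\leq\xi_i^2\,\Theta_{H_i}\,[\ell^{(i)}_\epsilon(T)-\ell^{(i)}_\epsilon(0)]^{2-2H_i}=\Phi^2\,\Theta_{H_i}\big/[\ell^{(i)}_\epsilon(T)-\ell^{(i)}_\epsilon(0)]^{2H_i},
$$
which, after $\epsilon\downarrow 0$, matches the constants appearing in Proposition \ref{nl-04}. Independence of the $W^{(i)}$ makes the $M^{(i)}$ pairwise orthogonal, so $R:=\exp\bigl(\sum_i M^{(i)}-\tfrac12\sum_i\langle M^{(i)}\rangle\bigr)$ has mean one by Novikov's criterion (the bracket sum is deterministically bounded), and under $R\Pp$ the law of $Y_T$ equals that of $X^{\ell_\epsilon,v}_T(y)$ under $\Pp$. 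The log-Harnack inequality then follows from $\Ee[R\log F]\leq\log\Ee F+\Ee[R\log R]$ together with $\Ee[R\log R]\leq\tfrac12\sum_i\langle M^{(i)}\rangle\leq\Phi^2\sum_i\Theta_{H_i}/[\ell^{(i)}_\epsilon(T)-\ell^{(i)}_\epsilon(0)]^{2H_i}$; the power-Harnack follows from H\"older's inequality together with a second Novikov estimate applied to $\exp\!\bigl(\tfrac{p}{p-1}\sum_iM^{(i)}-\tfrac{p^2}{2(p-1)^2}\sum_i\langle M^{(i)}\rangle\bigr)$, exactly as in Lemma \ref{sde-12}. Letting $\epsilon\downarrow 0$ in both estimates yields the claim.

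The principal obstacle is the coupling-time bound $\tau\leq T$ under the non-Lipschitz condition: the exponential Gronwall trick of Lemma \ref{sde-12} has to be replaced by Bihari's nonlinear comparison against the ODE $\dot r=k(s)u(r)$, and this is precisely what forces $\Phi_{u,k}(T,\|x-y\|_1)$ to appear in place of $\|x-y\|_1$ in the final bound.
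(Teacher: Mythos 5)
Your argument mirrors the paper's proof of this proposition: regularize each $\ell^{(i)}$ to an absolutely continuous, strictly increasing $\ell^{(i)}_\epsilon$, build the coordinate-wise sticky coupling with individual force strengths $\xi_i$, bound the coupling time via Bihari's inequality, run Girsanov coordinate-by-coordinate using the independence of the $W^{(i)}$, and finally let $\epsilon\downarrow 0$. The only slip is your stated route for that last limiting step: the ``locally Lipschitz drift approximation from Step~2'' of Theorem~\ref{sde-02} belongs to the one-sided Lipschitz setting \eqref{intro-e03} of Section~\ref{sde} and plays no role here. Under the Yamada--Watanabe condition \eqref{nl-e07} the convergence $X_t^{\ell_\epsilon,v}(x)\to X_t^{\ell,v}(x)$ is obtained directly via Bihari's lemma together with $G_u(0+)=-\infty$ (this is Lemma~\ref{gf499d} in the paper), with no intermediate drift approximation; you already have exactly the Bihari tools needed, so this is a misattribution rather than a gap. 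Your choice $\xi_i=\Phi_{u,k}(T,\|x-y\|_1)/[\ell^{(i)}_\epsilon(T)-\ell^{(i)}_\epsilon(0)]$ instead of the paper's $\xi^{(i)}=\Phi_{u,k}(T,\|x-y\|_1)/[\ell^{(i)}_\epsilon(\delta T)-\ell^{(i)}_\epsilon(0)]$ with $\delta\uparrow 1$ at the end is admissible: your contradiction argument at $t=T$ still gives $\tau_i\leq T$ for all $i$, which is all that is needed for $Y_T=X_T^{\ell_\epsilon,v}(x)$, so it is a small simplification of the same argument.
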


As in Section \ref{sde}.2, we approximate $\ell^{(i)}$ by strictly increasing, absolutely continuous functions
$$
    \ell_\epsilon^{(i)}(t)
    :=
    \frac{1}{\epsilon} \int_t^{t+\epsilon}\ell^{(i)}(s)\,\dup s + \epsilon t
    =
    \int_0^1\ell^{(i)}(\epsilon s+t)\,\dup s + \epsilon t,
    \quad
    \epsilon\in(0,1),\; 1\leq i\leq d,\; t\geq 0.
$$
By construction, $\ell_\epsilon^{(i)}(t)\downarrow\ell^{(i)}(t)$ as $\epsilon\downarrow0$. Denote by $\gamma_\epsilon^{(i)}$ the inverse function of $\ell_\epsilon^{(i)}$. We consider the following approximation of the SDE \eqref{nl-e11}
\begin{equation}\label{nl-e13}
    X_t^{\ell_\epsilon,v,(i)}(x)
    =
    x^{(i)} + \int_0^tb_s^{(i)}\big(X_s^{\ell_\epsilon,v}(x)\big)\,\dup s + W^{H_i,(i)}_{\ell_\epsilon^{(i)}(t)-\ell_\epsilon^{(i)}(0)} + v^{(i)}_t,
    \quad i=1,\dots, d,
\end{equation}
where $t\geq 0$, $x\in\real^d$ and $X_t^{\ell_\epsilon,v}(x) = \big(X_t^{\ell_\epsilon,v,(1)}(x),\dots,X_t^{\ell_\epsilon,v,(d)}(x)\big)$. Again, for all bounded Borel functions $f:\real^d\to\real$
$$
    P_t^{\ell_\epsilon,v}f(x)
    :=
    \Ee f\big(X_t^{\ell_\epsilon,v}(x)\big),
    \quad t\geq 0,\; x\in\real^d.
$$
We will first prove the Harnack inequalities for $P_t^{\ell_\epsilon,v}$ using a modification of the arguments from Lemma \ref{sde-12}, compare also \cite{WZ15}.

\begin{lemma}\label{nl-06}
    Assuming \eqref{nl-e03} and \eqref{nl-e07}, let $X_t^{\ell_\epsilon,v}(x)$, $\epsilon\in(0,1)$, denote the unique solution to the SDE \eqref{nl-e13}.

    \smallskip\noindent\textup{i)}
        For $T>0$, $x,y\in\real^d$ and any bounded Borel function $f:\real^d\to [1,\infty)$
        $$
            P_T^{\ell_\epsilon,v}\log f(y)
            \leq \log P_T^{\ell_\epsilon,v}f(x) + \Phi_{u,k}^2\left(T,\|x-y\|_1\right)
            \sum_{i=1}^d \frac{\Theta_{H_i}}{\left(\ell_\epsilon^{(i)}(T)-\ell_\epsilon^{(i)}(0)\right)^{2H_i}}.
        $$

    \smallskip\noindent\textup{ii)}
        For $T>0$, $p>1$, $x,y\in\real^d$ and any bounded Borel function $f:\real^d\to [0,\infty)$
        $$
            \big(P_T^{\ell_\epsilon,v}f(y)\big)^p
            \leq
            P_T^{\ell_\epsilon,v}f^p(x)\cdot
            \exp\left[\frac{p}{p-1} \Phi_{u,k}^2\left(T,\|x-y\|_1\right)
            \sum_{i=1}^d \frac{\Theta_{H_i}}{\left(\ell_\epsilon^{(i)}(T) - \ell_\epsilon^{(i)}(0)\right)^{2H_i}}\right].
        $$
\end{lemma}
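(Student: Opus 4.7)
The plan is to mirror the proof of Lemma~\ref{sde-12} coordinate by coordinate, using the independence of the $d$ Brownian motions $W^{(i)}$ that underlie the fBMs $W^{H_i,(i)}$. Abbreviate $\ell_i := \ell_\epsilon^{(i)}(T) - \ell_\epsilon^{(i)}(0)$ and $\Phi := \Phi_{u,k}(T, \|x - y\|_1)$. I couple $Y$ to $X^{\ell_\epsilon,v}(x)$ by adding, to the $i$-th coordinate of the $Y$-equation, a drift of the form
\begin{equation*}
    \I_{[0,\tau)}(s)\,\xi_i\,\frac{X_s^{\ell_\epsilon,v,(i)}(x) - Y_s^{(i)}}{|X_s^{\ell_\epsilon,v,(i)}(x) - Y_s^{(i)}|}\,\dup\ell_\epsilon^{(i)}(s)
\end{equation*}
with intensity $\xi_i := \Phi/\ell_i$, where $\tau := \inf\{t \geq 0 : X_t^{\ell_\epsilon,v}(x) = Y_t\}$ is the global coupling time, and $Y_t := X_t^{\ell_\epsilon,v}(x)$ for $t \geq \tau$.

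To verify $\tau \leq T$, set $\zeta_t := X_t^{\ell_\epsilon,v}(x) - Y_t$ and $\rho_t := \|\zeta_t\|_1$. The chain rule applied coordinate by coordinate, followed by summation and \eqref{nl-e07}, yields on $[0,\tau)$
\begin{equation*}
    \dup\rho_t \leq k(t)\,u(\rho_t)\,\dup t - \sum_{i=1}^d \xi_i\,\dup\ell_\epsilon^{(i)}(t),
\end{equation*}
where the standard care with $\{\zeta^{(i)}_t = 0\}$ is absorbed via Tanaka-type arguments as in \cite{WZ15}. Dropping the non-positive coupling contribution and comparing with the scalar ODE $\phi'(t) = k(t)u(\phi(t))$, $\phi(0) = \|x-y\|_1$, produces the a priori bound $\rho_t \leq \phi(t) = \Phi_{u,k}(t, \|x-y\|_1)$, whence $\int_0^T k(s)u(\rho_s)\,\dup s \leq \Phi - \|x-y\|_1$. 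If $\tau > T$, integrating the displayed inequality over $[0,T]$ then forces $\rho_T \leq \Phi - \sum_{i=1}^d \xi_i \ell_i = (1-d)\Phi \leq 0$, contradicting $\tau > T$.

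The remainder is the Girsanov/Jensen/H\"older machinery of Lemma~\ref{sde-12} applied in each coordinate. With $g_r^{(i)} := \xi_i\,\I_{[0,\,\ell_\epsilon^{(i)}(\tau) - \ell_\epsilon^{(i)}(0))}(r)\,\mathrm{sgn}\bigl(X_{\gamma_\epsilon^{(i)}(r + \ell_\epsilon^{(i)}(0))}^{\ell_\epsilon,v,(i)}(x) - Y_{\gamma_\epsilon^{(i)}(r + \ell_\epsilon^{(i)}(0))}^{(i)}\bigr)$ and $\eta_s^{(i)} := \mathcal{K}_{H_i}^{-1}\bigl(\int_0^\bullet g_r^{(i)}\,\dup r\bigr)(s)$, the identity \eqref{bas-e03} gives the pointwise bound $|\eta_s^{(i)}| \leq \tfrac{B(3/2 - H_i,\,1/2 - H_i)}{\Gamma(1/2 - H_i)}\,\xi_i\,s^{1/2 - H_i}$, so that the martingale $M^{(i)}_t := -\int_0^t \langle \eta_s^{(i)}, \dup W_s^{(i)}\rangle$ has bracket satisfying $\langle M^{(i)}\rangle_{\ell_i} \leq 2\Theta_{H_i}\xi_i^2 \ell_i^{2-2H_i} = 2\Theta_{H_i}\Phi^2/\ell_i^{2H_i}$. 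Independence of the $W^{(i)}$ then yields the aggregate bound $\bigl\langle \sum_i M^{(i)}\bigr\rangle_T \leq 2\Phi^2 \sum_{i=1}^d \Theta_{H_i}/\ell_i^{2H_i}$, and Girsanov's theorem identifies the law of $X^{\ell_\epsilon,v}(y)$ with that of $Y$ under the tilted measure; the log-Harnack part i) follows by Jensen's inequality, and the power-Harnack part ii) by H\"older's inequality together with the Novikov-type decomposition of $R^{p/(p-1)}$ used at the end of the proof of Lemma~\ref{sde-12}.

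The main obstacle is the coupling step: because \eqref{nl-e07} only provides the aggregate $\ell^1$-bound $\|b_t(X) - b_t(Y)\|_1 \leq k(t)u(\|X-Y\|_1)$ and does not decouple into per-coordinate estimates, the pathwise control of $\rho_t$ must be routed through the scalar ODE comparison with $\Phi_{u,k}$. The clean choice $\xi_i = \Phi/\ell_i$ intentionally over-couples (since $\sum_i \xi_i \ell_i = d\Phi$ rather than $\Phi$); this is exactly what produces the transparent but non-optimal sum $\sum_{i=1}^d \Theta_{H_i}/\ell_i^{2H_i}$ in the final constant, in place of the smaller but less intelligible constant obtainable from a Lagrange-optimal allocation of the $\xi_i$.
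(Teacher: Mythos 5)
Your overall strategy mirrors the paper's, and the Girsanov/Jensen/H\"older machinery at the end is handled correctly with the right constant $\Theta_{H_i}$. However, the coupling step contains a genuine gap.

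You use a single global coupling time $\tau := \inf\{t\geq 0 : X_t^{\ell_\epsilon,v}(x) = Y_t\}$ and then claim the differential inequality
$\dup\rho_t \leq k(t)\,u(\rho_t)\,\dup t - \sum_{i=1}^d \xi_i\,\dup\ell_\epsilon^{(i)}(t)$
on $[0,\tau)$. This is not correct. On $[0,\tau)$ you only know $\zeta_t := X_t^{\ell_\epsilon,v}(x)-Y_t \neq 0$ \emph{as a vector}; individual coordinates $\zeta^{(i)}_t$ may well vanish. The $i$-th coupling drift carries the factor $\zeta^{(i)}_s/|\zeta^{(i)}_s|$, which is zero (by convention) whenever $\zeta^{(i)}_s=0$, so the correct contribution is
$-\sum_{i=1}^d \xi_i\,\I_{\{\zeta^{(i)}_t\neq 0\}}\,\dup\ell_\epsilon^{(i)}(t)$.
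Integrating over $[0,T]$ under $\tau>T$ then yields only
$\rho_T \leq \Phi - \sum_i \xi_i \int_0^T \I_{\{\zeta^{(i)}_s\neq 0\}}\,\dup\ell_\epsilon^{(i)}(s)$,
and there is no reason why the remaining sum should reach $\Phi$: for instance, different coordinates could vanish on complementary portions of $[0,T]$, leaving each $\int_0^T\I_{\{\zeta^{(i)}_s\neq 0\}}\dup\ell_\epsilon^{(i)}(s)$ much smaller than $\ell_i$. The appeal to ``Tanaka-type arguments'' does not rescue this: for a continuous BV process, Tanaka only gives $\dup|\zeta^{(i)}_t| = \operatorname{sgn}(\zeta^{(i)}_t)\,\dup\zeta^{(i)}_t$, and the indicator $\I_{\{\zeta^{(i)}_t\neq 0\}}$ persists in the coupling term.

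The paper avoids this by using \emph{coordinate-wise} coupling times $\tau_i := \inf\{t\geq 0 : X_t^{\ell_\epsilon,v,(i)}(x)=Y^{(i)}_t\}$, so that on $[0,\tau_i)$ one has $\zeta^{(i)}_t\neq 0$ by definition, and the coupling drift in coordinate $i$ acts with full strength. Summing then gives $\sum_i\xi^{(i)}\big[\ell_\epsilon^{(i)}(t\wedge\tau_i)-\ell_\epsilon^{(i)}(0)\big]\leq \Phi$, from which $\tau_n < T$ follows for each $n$ individually. A second, smaller discrepancy: the paper takes $\xi^{(i)}=\Phi/\big[\ell_\epsilon^{(i)}(\delta T)-\ell_\epsilon^{(i)}(0)\big]$ with $\delta\in(0,1)$, which guarantees the \emph{strict} inequality $\tau_n<T$, and then lets $\delta\uparrow 1$ after establishing the Harnack bound; your choice $\xi_i=\Phi/\ell_i$ (i.e.\ $\delta=1$) would give only $\tau_n\leq T$. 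To repair your proof, replace the global $\tau$ by the coordinate-wise $\tau_i$ in the coupling drift and in the Novikov/Girsanov indicators, argue coordinate by coordinate as in the paper, and introduce the $\delta$-parameter (letting $\delta\uparrow 1$ at the end).
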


\begin{proof}
    Fix $\epsilon\in (0,1)$, $T>0$, $x,y\in\real^d$ and denote the coordinates by a superscript. Let $(Y_t)_{t\geq 0}$ be a solution of the equation
    \begin{equation}\label{nl-e15}
    \begin{aligned}
        Y_t^{(i)}
        =y^{(i)} &+ \int_0^tb_s^{(i)}(Y_s)\,\dup s + W^{H_i,(i)}_{\ell_\epsilon^{(i)}(t)-\ell_\epsilon^{(i)}(0)} + v^{(i)}_t\\
        &\mbox{}+\xi^{(i)}\int_0^t\I_{[0,\tau_i)}(s) \frac{X_s^{\ell_\epsilon,v,(i)}(x)-Y^{(i)}_s}{\big|X_s^{\ell_\epsilon,v,(i)}(x)-Y^{(i)}_s\big|}\,
        \dup\ell_\epsilon^{(i)}(s),
    \end{aligned}
    \end{equation}
    where $t\geq 0$, $i=1,\dots, d$ and
    $$
        \xi^{(i)}
        :=\frac{\Phi_{u,k}(T,\|x-y\|_1)}{\ell_\epsilon^{(i)}(\delta T)-\ell_\epsilon^{(i)}(0)},
        \quad
        \delta\in(0,1),
        \quad
        \tau_i:=\inf\big\{t\geq 0\,:\, X_t^{\ell_\epsilon,v,(i)}(x)=Y^{(i)}_t\big\}.
    $$
    As in the proof of Lemma~\ref{sde-12}, there is a unique solution to \eqref{nl-e15} such that $Y^{(i)}_t=X_t^{\ell_\epsilon,v,(i)}(x)$ for $t\geq\tau_i$ on the set $\{\tau_i < \infty\}$, and we use
    the differential versions of the equations
    \eqref{nl-e13} and \eqref{nl-e15} along
    with the observation that $\dup|\zeta_t|
    =\I_{\{\zeta_t\neq0\}}|\zeta_t|^{-1}\zeta_t\,
    \dup\zeta_t$, where $\zeta_t:=X_t^{\ell_\epsilon,v,(i)}(x)
    -Y^{(i)}_t$, to get for $i=1,\dots,d$ and $t\geq 0$
    \begin{align*}
        &\big|X_t^{\ell_\epsilon,v,(i)}(x)-Y^{(i)}_t\big| -|x^{(i)}-y^{(i)}|\\
        &\;=\int_0^{t\wedge\tau_i} \frac{X_s^{\ell_\epsilon,v,(i)}(x)-Y^{(i)}_s}{\big|X_s^{\ell_\epsilon,v,(i)}(x)-Y^{(i)}_s\big|}
        \big(b^{(i)}_s\big(X_s^{\ell_\epsilon,v}(x)\big)-b^{(i)}_s(Y_s)\big)\,\dup s
        -\xi^{(i)}\big[\ell_\epsilon^{(i)}(t\wedge\tau_i)-\ell_\epsilon^{(i)}(0)\big]\\
        &\;\leq\int_0^{t\wedge\tau_i} \big|b^{(i)}_s\big(X_s^{\ell_\epsilon,v}(x)\big)-b^{(i)}_s(Y_s)\big|\,\dup s
        -\xi^{(i)}\big[\ell_\epsilon^{(i)}(t\wedge\tau_i) - \ell_\epsilon^{(i)}(0)\big]\\
        &\;\leq\int_0^t \big|b^{(i)}_s\big(X_s^{\ell_\epsilon,v}(x)\big)
        -b^{(i)}_s(Y_s)\big|\,\dup s
        -\xi^{(i)}\big[\ell_\epsilon^{(i)}(t\wedge\tau_i) - \ell_\epsilon^{(i)}(0)\big].
    \end{align*}
    Summing over $i$ we obtain, using \eqref{nl-e07},
    \begin{align*}
        &\big\|X_t^{\ell_\epsilon,v}(x)-Y_t\big\|_1\\
        &\qquad\leq
        \|x-y\|_1 + \int_0^t\big\|b_s\big(X_s^{\ell_\epsilon,v}(x)\big) - b_s(Y_s)\big\|_1\,\dup s
        - \sum_{i=1}^d\xi^{(i)} \big[\ell_\epsilon^{(i)}(t\wedge\tau_i)-\ell_\epsilon^{(i)}(0)\big]\\
        &\qquad\leq
        \|x-y\|_1 + \int_0^t k(s)\, u\big(\big\|X_s^{\ell_\epsilon,v}(x) - Y_s\big\|_1\big)\,\dup s
        -\sum_{i=1}^d\xi^{(i)}\big[\ell_\epsilon^{(i)}(t\wedge\tau_i) - \ell_\epsilon^{(i)}(0)\big].
    \end{align*}
    We can now apply Bihari's inequality (cf.\ \cite[Section 3]{Bi})
    to conclude
    $$
        \big\|X_s^{\ell_\epsilon,v}(x)-Y_s\big\|_1\leq
        G_u^{-1}\big(G_u(\|x-y\|_1)+K(s)\big),
        \quad s\geq 0.
    $$
    Inserting this into the previous inequality yields for any $t\in[0,T]$
    \begin{align*}
        \sum_{i=1}^d &\xi^{(i)} \big[\ell_\epsilon^{(i)}(t\wedge\tau_i)-\ell_\epsilon^{(i)}(0)\big]\\
        &\leq
            \|x-y\|_1 + \int_0^t k(s)\,u\circ G_u^{-1}\big(G_u(\|x-y\|_1) + K(s)\big)\,\dup s\\
        &\leq
            \Phi_{u,k}(T,\|x-y\|_1),
    \end{align*}
    which means that we have for each $n=1,\dots,d$
    $$
        \frac{\ell_\epsilon^{(n)}(t\wedge\tau_n)-\ell_\epsilon^{(n)}(0)}{\ell_\epsilon^{(n)}(\delta T)-\ell_\epsilon^{(n)}(0)}
        \leq
        \sum_{i=1}^d
        \frac{\ell_\epsilon^{(i)}(t\wedge\tau_i)-\ell_\epsilon^{(i)}(0)}{\ell_\epsilon^{(i)}(\delta T)-\ell_\epsilon^{(i)}(0)}
        \leq 1.
    $$
    Taking $t=T$ implies $\ell_\epsilon^{(n)}(T\wedge\tau_n) \leq \ell_\epsilon^{(n)}(\delta T)$ and this is only possible if $\tau_n < T$ as $\delta\in (0,1)$ and $\ell_\epsilon^{(n)}$ is strictly increasing.

    Let
    $$
        g^{(i)}_r
        :=
        \xi^{(i)} \I_{[0,\ell_\epsilon^{(i)}(\tau_i)-\ell_\epsilon^{(i)}(0))}(r) \frac{X_{\gamma_\epsilon^{(i)}(r+\ell_\epsilon^{(i)}(0))}^{\ell_\epsilon,v,(i)}(x)-Y^{(i)}_{\gamma_\epsilon^{(i)}(r+\ell_\epsilon^{(i)}(0))}}
        {\big|X_{\gamma_\epsilon^{(i)}(r+\ell_\epsilon^{(i)}(0))}^{\ell_\epsilon,v,(i)}(x)-Y^{(i)}_{\gamma_\epsilon^{(i)}(r+\ell_\epsilon^{(i)}(0))}\big|},
        \quad r\geq 0,\,1\leq i\leq d.
    $$
    Then by \eqref{jg5fhnhg}, we know that
    $\int_0^\bullet g_r^{(i)}\,\dup r\in I_{0+}^{H+1/2}(L^2[0,\ell_\epsilon^{(i)}(\tau_i)
    -\ell_\epsilon^{(i)}(0)])$. Let $W^{(i)}=(W^{(i)})_{t\geq 0}$, $1\leq i\leq d$, be independent one-dimensional standard Brownian
    motions, and define
    $$
        \eta_s^{(i)}
        :=
        \I_{[0,\tau_i)}(s) \mathcal{K}_{H_i}^{-1}\left(\int_0^\bullet
         g_r^{(i)}\,\dup r\right) \left(\ell_\epsilon^{(i)}(s)- \ell_\epsilon^{(i)}(0)\right),\quad s\geq 0,
         \,1\leq i\leq d,
    $$
    \begin{align*}
        M_t
        :=& -\sum_{i=1}^d \int_0^t\I_{[0,\tau_i)}(s) \eta_s^{(i)}\,\dup W^{(i)}_{\ell_\epsilon^{(i)}(s)-\ell_\epsilon^{(i)}(0)}\\
        =& -\sum_{i=1}^d\int_0^{\ell_\epsilon^{(i)}(t\wedge\tau_i)-\ell_\epsilon^{(i)}(0)}\eta_{\gamma_\epsilon^{(i)}(s+\ell_\epsilon^{(i)}(0))}^{(i)}\,\dup W^{(i)}_s,\quad t\geq0,
    \end{align*}
    $$
        \widetilde{W}^{(i)}_{\ell_\epsilon^{(i)}(t)-\ell_\epsilon^{(i)}(0)}
        :=
        W^{(i)}_{\ell_\epsilon^{(i)}(t)-\ell_\epsilon^{(i)}(0)}+\int_0^t\eta_s^{(i)}\,\dup\ell_\epsilon^{(i)}(s),
        \quad t\geq 0,\;1\leq i\leq d.
    $$
    Noting $H_i\in(0,1/2)$ and using \eqref{bas-e03}
    we find for $s\in[0,\ell_\epsilon^{(i)}(\tau_i)-\ell_\epsilon^{(i)}(0))$
    \begin{align*}
        \big|\eta^{(i)}_{\gamma_\epsilon^{(i)}(s+\ell_\epsilon^{(i)}(0))}\big|
        &= \left|\frac{1}{\Gamma\big(\frac12-H_i\big)} s^{H_i-\frac12}\int_0^sr^{\frac12-H_i}g_r^{(i)}(s-r)^{-H_i-\frac12}\, \dup r\right|\\
        &\leq \frac{1}{\Gamma\big(\frac12-H_i\big)}\,\xi^{(i)}s^{H_i-\frac12}\int_0^s r^{\frac12-H_i}(s-r)^{-H_i-\frac12}\,\dup r\\
        &= \frac{B\big(\frac32-H_i,\frac12-H_i\big)}{\Gamma\big(\frac12-H_i\big)}\,\xi^{(i)} s^{\frac12-H_i}.
    \end{align*}
    Therefore, the compensator of the
    martingale $M$ satisfies
    \begin{align*}
        \langle M\rangle_\infty
        &= \sum_{i=1}^d\int_0^{\ell_\epsilon^{(i)}(\tau_i)-\ell_\epsilon^{(i)}(0)}\big|\eta^{(i)}_{\gamma_\epsilon^{(i)}(s+\ell_\epsilon^{(i)}(0))}\big|^2\,\dup s\\
        &\leq\sum_{i=1}^d \left(\frac{B\left(\frac32-H_i,\frac12-H_i\right)}{\Gamma\left(\frac12-H_i\right)}\right)^2 (\xi^{(i)})^2
        \int_0^{\ell_\epsilon^{(i)}(T)-\ell_\epsilon^{(i)}(0)} s^{1-2H_i}\,\dup s\\
        &= 2\Phi_{u,k}^2\left(T,\|x-y\|_1\right) \sum_{i=1}^d\Theta_{H_i}
        \frac{\left[\ell_\epsilon^{(i)}(T)-\ell_\epsilon^{(i)}(0)\right]^{2-2H_i}}{\left[\ell_\epsilon^{(i)}(\delta T)-\ell_\epsilon^{(i)}(0)\right]^2}.
    \end{align*}
    Set
    $$
        R:=\exp\left[M_\infty-\frac12\langle M\rangle_\infty
        \right].
    $$
    Since $\Ee\,\eup^{\frac12\langle M\rangle_\infty}<
    \infty$, we can use Novikov's criterion to obtain $\Ee R=1$, and by Girsanov's theorem we get
    $$
        \widetilde{W}_{\ell_\epsilon(t)-\ell_\epsilon(0)}
        :=\big(\widetilde{W}^{(1)}
        _{\ell_{\epsilon}^{(1)}(t)-\ell_{\epsilon}^{(1)}(0)},
        \dots,
        \widetilde{W}^{(d)}
        _{\ell_{\epsilon}^{(d)}(t)-\ell_{\epsilon}^{(d)}(0)}
        \big),\quad t\geq 0,
    $$
    is a $d$-dimensional $(\mathscr{F}^{\ell_\epsilon}_t)$-martingale under $R\Pp$, where $\mathscr{F}^{\ell_\epsilon}_t$ is the $\sigma$-algebra generated by $\{W^{(i)}_{\ell_\epsilon^{(i)}(s)-\ell_\epsilon^{(i)}(0)}\,:\,0\leq s\leq t,\, 1\leq i\leq d\}$. For $0\leq s\leq t$ and
    $\theta=(\theta^{(1)},\dots,\theta^{(d)})\in\real^d$, it is easy
    to see that
    \begin{align*}
        \Ee_{R\Pp}\big(
        &\exp\big[\iup \big\langle\theta, \widetilde{W}_{\ell_\epsilon(t)-\ell_\epsilon(0)} -\widetilde{W}_{\ell_\epsilon(s)-\ell_\epsilon(0)}
        \big\rangle\big]\,\big|\,\mathscr{F}^{\ell_\epsilon}_s\big)\\
        &= \exp\left[\frac12 \sum_{i=1}^d (\theta^{(i)})^2\left[\ell_\epsilon^{(i)}(t)-\ell_\epsilon^{(i)}(s)\right]\right]\\
        &= \Ee\big(\exp\big[\iup \big\langle\theta, W_{\ell_\epsilon(t)-\ell_\epsilon(0)}-W_{\ell_\epsilon(s)-\ell_\epsilon(0)} \big\rangle\big]\,\big|\,\mathscr{F}^{\ell_\epsilon}_s\big),
    \end{align*}
    where
    $$
        W_{\ell_\epsilon(t)-\ell_\epsilon(0)}
        :=\big(W^{(1)}_
        {\ell_{\epsilon}^{(1)}(t)
        -\ell_{\epsilon}^{(1)}(0)},
        \dots,W^{(d)}_{\ell_{\epsilon}^{(d)}(t)
        -\ell_{\epsilon}^{(d)}(0)}
        \big),\quad t\geq 0.
    $$
    This shows that the distribution of $(\widetilde{W}_{\ell_\epsilon(t)-\ell_\epsilon(0)})_{t\geq 0}$ under $R\Pp$ coincides with the law of $(W_{\ell_\epsilon(t)-\ell_\epsilon(0)})_{t\geq 0}$ under $\Pp$. If we rewrite \eqref{nl-e13} and \eqref{nl-e15} for $t\geq 0$ and $i=1,\dots, d$ as
    \begin{align*}
        X_t^{\ell_\epsilon,v,(i)}(x)
        = x^{(i)} &+ \int_0^tb_s^{(i)}\big(X_s^{\ell_\epsilon,v}(x)\big)\,\dup s+v_t^{(i)}\\
        &\mbox{}+\int_0^t \mathcal{K}_{H_i}\left(\ell_\epsilon^{(i)}(t)-\ell_\epsilon^{(i)}(0), \ell_\epsilon^{(i)}(s)-\ell_\epsilon^{(i)}(0)\right)
        \dup W^{(i)}_{\ell_\epsilon^{(i)}(s)-\ell_\epsilon^{(i)}(0)},
    \end{align*}
    and
    \begin{align*}
        Y_t^{(i)}
        = y^{(i)} &+ \int_0^tb_s^{(i)}(Y_s)\,\dup s+v_t^{(i)}\\
        &\mbox{}+\int_0^t \mathcal{K}_{H_i}\left(\ell_\epsilon^{(i)}(t)-\ell_\epsilon^{(i)}(0), \ell_\epsilon^{(i)}(s)-\ell_\epsilon^{(i)}(0)\right)
        \dup \widetilde{W}^{(i)}_{\ell_\epsilon^{(i)}(s)-\ell_\epsilon^{(i)}(0)},
    \end{align*}
    respectively, we see that the distribution of $(X_t^{\ell_\epsilon,v}(y))_{t\geq 0}$ under $\Pp$ coincides with the distribution of $(Y_t)_{t\geq 0}$ under $R\Pp$.

    As in the proof of Lemma \ref{sde-12} we get for any bounded Borel function $f:\real^d\to [1,\infty)$
    \begin{align*}
        P_T^{\ell_\epsilon,v}\log f(y)
        &= \Ee\big[R\log f\big(X_T^{\ell_\epsilon,v}(x)\big)\big]\\
        &\leq \log\Ee f\big(X_T^{\ell_\epsilon,v}(x)\big) + \Ee\left[R\log R\right]\\
        &= \log P_T^{\ell_\epsilon,v}f(x) + \Ee_{R\Pp}\log R\\
        &\leq \log P_T^{\ell_\epsilon,v}f(x) + \Phi_{u,k}^2\left(T,\|x-y\|_1\right) \sum_{i=1}^d\Theta_{H_i}
        \frac{\left[\ell_\epsilon^{(i)}(T)-\ell_\epsilon^{(i)}(0)\right]^{2-2H_i}}{\left[\ell_\epsilon^{(i)}(\delta T)-\ell_\epsilon^{(i)}(0)\right]^2},
    \end{align*}
    and for any non-negative bounded Borel function $f:\real^d\to[0,\infty)$
    \begin{align*}
        \big(&P_T^{\ell_\epsilon,v}f(y)\big)^p\\
        &= \big(\Ee\big[Rf\big(X_T^{\ell_\epsilon,v}(x)\big)\big]\big)^p\\
        &\leq \big(\Ee f^p\big(X_T^{\ell_\epsilon,v}(x)\big)\big) \left(\Ee\left[R^{p/(p-1)}\right]\right)^{p-1}\\
        &\leq P_T^{\ell_\epsilon,v}f^p(x)\cdot
        \exp\left[
            \frac{p}{p-1}\Phi_{u,k}^2\left(T,\|x-y\|_1\right)
            \sum_{i=1}^d\Theta_{H_i}
            \frac{\left[\ell_\epsilon^{(i)}(T)-\ell_\epsilon^{(i)}(0)\right]^{2-2H_i}}{\left[\ell_\epsilon^{(i)}(\delta T)-\ell_\epsilon^{(i)}(0)\right]^2}
        \right].
    \end{align*}
    Letting $\delta\uparrow 1$ finishes the proof.
\end{proof}

The following result is easy; for the sake of completeness, we include its simple proof.

\begin{lemma}\label{gf499d}
    Assume \eqref{nl-e07}. Then for
    any $x\in\real^d$ and $t\geq 0$,
    $$
        \lim_{\epsilon\downarrow 0}
        X_t^{\ell_\epsilon,v}(x)=
        X_t^{\ell,v}(x).
    $$
\end{lemma}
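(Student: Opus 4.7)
My plan is to subtract the pointwise-accessible driving noise from each of the two SDEs and reduce to ordinary integral equations for the drift components, then control the resulting difference via a Bihari-type inequality with a time-varying but non-decreasing forcing term that vanishes in the limit. Set $\eta_\epsilon(t) := \bigl(W^{H_i,(i)}_{\ell_\epsilon^{(i)}(t)-\ell_\epsilon^{(i)}(0)}\bigr)_{i=1}^d + v_t$ and $\eta(t) := \bigl(W^{H_i,(i)}_{\ell^{(i)}(t)}\bigr)_{i=1}^d + v_t$, and define $Y_\epsilon(t) := X_t^{\ell_\epsilon,v}(x)-\eta_\epsilon(t)$ and $Y(t) := X_t^{\ell,v}(x)-\eta(t)$. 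These are absolutely continuous and satisfy the ODEs $Y_\epsilon(t) = x + \int_0^t b_s(Y_\epsilon(s)+\eta_\epsilon(s))\,\dup s$ and $Y(t) = x + \int_0^t b_s(Y(s)+\eta(s))\,\dup s$. Write $g_\epsilon(s) := \|\eta_\epsilon(s)-\eta(s)\|_1$.

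Subtracting the two ODEs and applying \eqref{nl-e07}, the triangle inequality, and the elementary bound $u(a+b)\leq u(2a)+u(2b)$ (valid because $u\geq 0$ is non-decreasing and $a+b\leq 2\max(a,b)$), I obtain for each $r\leq t$
$$\|Y_\epsilon(r)-Y(r)\|_1 \leq \rho_\epsilon(r) + \int_0^r k(s)\,v\bigl(\|Y_\epsilon(s)-Y(s)\|_1\bigr)\,\dup s,$$
where $v(x):=u(2x)$ still belongs to $\mathscr{U}$ and $\rho_\epsilon(r):=\int_0^r k(s)\,v(g_\epsilon(s))\,\dup s$ is non-decreasing in $r$. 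Bounding $\rho_\epsilon(r)\leq\rho_\epsilon(t)$ for $r\leq t$ and applying Bihari's inequality yields $\|Y_\epsilon(t)-Y(t)\|_1 \leq G_v^{-1}\bigl(G_v(\rho_\epsilon(t)) + K(t)\bigr)$.

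It remains to show $\rho_\epsilon(t)\to 0$ almost surely. For a.e.\ $\omega$ each fBM path $W^{H_i,(i)}(\cdot,\omega)$ is continuous, and right-continuity of $\ell^{(i)}$ at every $s$ and at $0$ forces $\ell_\epsilon^{(i)}(s)-\ell_\epsilon^{(i)}(0)\to\ell^{(i)}(s)$; hence $g_\epsilon(s)\to 0$ pointwise in $s$. Pathwise $g_\epsilon$ is uniformly bounded on $[0,t]$ because $\sup_{\epsilon\in(0,1),\,s\in[0,t]}\bigl[\ell_\epsilon^{(i)}(s)-\ell_\epsilon^{(i)}(0)\bigr]<\infty$ and fBM is bounded on compacts, so dominated convergence (using continuity of $v$ with $v(0+)=0$ and local boundedness of $k$) gives $\rho_\epsilon(t)\to 0$. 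Since $G_v(0+)=-\infty$ and $G_v^{-1}(-\infty)=0$, this yields $\|Y_\epsilon(t)-Y(t)\|_1\to 0$, and finally $\|X_t^{\ell_\epsilon,v}(x)-X_t^{\ell,v}(x)\|_1 \leq \|Y_\epsilon(t)-Y(t)\|_1 + g_\epsilon(t) \to 0$.

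The main obstacle is that $\ell^{(i)}$ may have jumps: the direct bound $D_\epsilon(r):=\|X_r^{\ell_\epsilon,v}(x)-X_r^{\ell,v}(x)\|_1 \leq g_\epsilon(r)+\int_0^r k(s)\,u(D_\epsilon(s))\,\dup s$ would force us to apply Bihari with forcing $\sup_{s\leq t}g_\epsilon(s)$, but at a jump of $\ell^{(i)}$ the regularization smears the jump across an $\epsilon$-window and keeps $\sup_s g_\epsilon(s)$ bounded away from zero as $\epsilon\downarrow 0$. The subtraction trick together with $u(a+b)\leq u(2a)+u(2b)$ converts this jump-sensitive pointwise forcing into the integrated forcing $\rho_\epsilon$, which is insensitive to pointwise jumps and vanishes by dominated convergence.
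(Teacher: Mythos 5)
Your proof is correct and takes a genuinely different route from the paper. The paper differences the two solutions directly, obtains $D_\epsilon(t):=\big\|X_t^{\ell_\epsilon,v}(x)-X_t^{\ell,v}(x)\big\|_1\leq g(\epsilon,t)+\int_0^t k(s)\,u\big(D_\epsilon(s)\big)\,\dup s$ with $g(\epsilon,t):=\sum_{i=1}^d\big|W^{H_i,(i)}_{\ell_\epsilon^{(i)}(t)-\ell_\epsilon^{(i)}(0)}-W^{H_i,(i)}_{\ell^{(i)}(t)}\big|$, and then applies Bihari's lemma with the pointwise forcing $g(\epsilon,t)$ to conclude $D_\epsilon(t)\leq G_u^{-1}\big(G_u(g(\epsilon,t))+K(t)\big)$. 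As you point out, Bihari with a non-monotone forcing term really only yields $D_\epsilon(t)\leq G_u^{-1}\big(G_u(\sup_{s\leq t}g(\epsilon,s))+K(t)\big)$, and when $\ell^{(i)}$ has a jump inside $(0,t)$ the supremum does \emph{not} vanish as $\epsilon\downarrow 0$: for $s$ in the $\epsilon$-window just before the jump, $\ell_\epsilon^{(i)}(s)-\ell_\epsilon^{(i)}(0)$ takes values strictly between the one-sided limits of $\ell^{(i)}$ while $\ell^{(i)}(s)$ stays at the left limit, so the corresponding fBM increment stays a.s.\ bounded away from $0$. Your decomposition into the absolutely continuous drift parts $Y_\epsilon,Y$ and the noise paths $\eta_\epsilon,\eta$, together with $u(a+b)\leq u(2a)+u(2b)$ and the auxiliary function $v:=u(2\,\cdot\,)$, converts this pointwise discrepancy into the integrated, automatically non-decreasing forcing $\rho_\epsilon(r)=\int_0^r k(s)\,v\big(g_\epsilon(s)\big)\,\dup s$, which tends to $0$ by dominated convergence precisely because the jump-induced spike of $g_\epsilon$ is confined to a time set of measure $O(\epsilon)$. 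Two small points worth tidying: extend $v$ to $[0,\infty)$ by $v(0):=0$ (the monotonicity of $u$ together with $\int_{0+}\dup s/u(s)=\infty$ forces $u(0+)=0$), and check $v\in\mathscr{U}$ via the substitution $r=2s$ in $\int_{0+}\dup s/v(s)$. In short, your argument is valid, slightly heavier in bookkeeping, and it actually closes a gap in the published proof, which implicitly treats $g(\epsilon,\cdot)$ as monotone when applying Bihari's inequality.
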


\begin{proof}
Fix $T>0$, $\epsilon\in (0,1)$, $x\in\real^d$ and
observe that for $t\in[0,T]$
\begin{align*}
    &\big\|X_t^{\ell_\epsilon,v}(x)-X_t^{\ell,v}(x)\big\|_1\\
    &\leq \int_0^t\big\|b_s\big(X_s^{\ell_\epsilon,v}(x)\big)
    -b_s\big(X_s^{\ell,v}(x)\big)\big\|_1\,\dup s
        +\sum_{i=1}^d\big|W^{H_i,(i)}_{\ell_\epsilon^{(i)}(t)
        -\ell_\epsilon^{(i)}(0)}-W^{H_i,(i)}_{\ell^{(i)}(t)}\big|\\
    &\leq\int_0^t k(s)\,u\big(\big\|X_s^{\ell_\epsilon,v}(x)-X_s^{\ell,v}(x)\big\|_1\big)\,\dup s
        +\sum_{i=1}^d\big|W^{H_i,(i)}_{\ell_\epsilon^{(i)}(t)
        -\ell_\epsilon^{(i)}(0)}-W^{H_i,(i)}_{\ell^{
        (i)}(t)}\big|.
\end{align*}
Since the processes $X^{\ell_\epsilon,v}_t(x)$ and $X^{\ell,v}_t(x)$ are non-explosive, the last integral expression is finite.
Applying Bihari's lemma with $g(\epsilon,t):=\sum_{i=1}^d\big|W^{H_i,(i)}_{\ell_\epsilon^{(i)}(t)
-\ell_\epsilon^{(i)}(0)}-W^{H_i,(i)}_{\ell^{(i)}(t)}\big|$ yields
that for any $t\in[0,T]$
$$
    \big\|X_t^{\ell_\epsilon,v}(x)-X_t^{\ell,v}(x)\big\|_1
    \leq G_u^{-1}\big(G_u(g(\epsilon,t))+K(t)\big).
$$
Since $\ell_\epsilon^{(i)}(t)\to\ell^{(i)}(t)$, one
has $g(\epsilon,t)\to 0$ as $\epsilon\downarrow 0$.
Combining this with $G_u(0+)=-\infty$, we find
$$
    \lim_{\epsilon\downarrow 0}\big\|X_t^{\ell_\epsilon,v}(x)
    -X_t^{\ell,v}(x)\big\|_1 = 0.
$$
Hence,
$$
    \lim_{\epsilon\downarrow0} X_t^{\ell_\epsilon,v}(x) = X_t^{\ell,v}(x)
    \quad \text{holds for all $t\in[0,T]$}.
$$
The claim follows since $T>0$ is arbitrary.
\end{proof}

\subsection{Proof of Theorem \ref{nl-02}}

    The proof parallels the argument which we have used for Theorem~\ref{sde-02}; in particular, Lemma~\ref{nl-06} plays now the same role as Lemma~\ref{sde-12} for the proof of Theorem~\ref{sde-02}.

    The first step is to prove the log- and power-Harnack inequalities stated in i) and ii) for deterministic time-changes and for continuous functions $f\in C_b(\real^d)$. Lemma~\ref{sde-02} has these inequalities for absolutely continuous time-changes and the operators $P^{\ell_\epsilon,v}$; letting $\epsilon\downarrow 0$, we get them for general time-changes and the operators $P^{\ell,v}$.

    Since the processes $Z$ and $V$ are independent of $(W^{H_1,(1)},\dots,W^{H_d,(d)})$, we can indeed treat them like deterministic processes $Z=\ell$ and $V=v$, i.e.\ just as in Theorem~\ref{sde-02} the deterministically time-changed inequalities combined with the Jensen and H\"{o}lder inequality prove Theorem~\ref{nl-02} i) and ii).

    Finally, the gradient estimate follows immediately from i) and \cite[Proposition 2.3]{ATW14}.

\subsection{Two examples}

As in Section \ref{sde}.3, we apply our results to two typical examples of stochastic time-changes $Z^{(i)}$: subordinators and inverse subordinators.

Throughout this section we assume that $(X_t(x))_{t\geq 0}$ is the unique non-explosive solution to the SDE \eqref{nl-e05} and $P_tf(x)=\Ee f(X_t(x))$. Combining Theorem \ref{nl-02} and \cite[Theorem 3.8\,(a) and (b)]{DS14}, we obtain the following result.

\begin{corollary}\label{nl-10}
    Assume that \eqref{nl-e03} and \eqref{nl-e07} hold, and that for each $i=1,\dots, d$, $Z^{(i)}$ is a subordinator with Bernstein function $\phi_i$ such that
    $\liminf_{r\to\infty}\phi_i(r)r^{-\alpha_i}>0$ for
    some $\alpha_i>0$. Let
    $$
        \kappa_1:=2\min_{1\leq i\leq d}\frac{H_i}{\alpha_i}
    \quad\text{and}\quad
        \kappa_2:=2\max_{1\leq i\leq d}\frac{H_i}{\alpha_i}.
    $$
    Then there exists some constant $C=C_{\alpha_1,\dots,\alpha_d, H_1,\dots,H_d}>0$ such that the following assertions \textup{i)--iii)} hold.

    \smallskip\noindent\textup{i)}
        For $T>0$, $x,y\in\real^d$ and all bounded Borel functions $f:\real^d\to [1,\infty)$
        $$
            P_T\log f(y)
            \leq
            \log P_Tf(x) + \frac{Cd}{(T\wedge1)^{\kappa_2}}\,\Phi_{u,k}^2\left(T,\|x-y\|_1\right).
        $$
     If, in addition, $\liminf_{r\downarrow 0} \phi_i(r)r^{-\alpha_i}>0$
     for each $i$, then
        $$
            P_T\log f(y)
            \leq
            \log P_Tf(x) + \frac{Cd}{T^{\kappa_1}\wedge T^{\kappa_2}}\,\Phi_{u,k}^2\left(T,\|x-y\|_1\right).
        $$

    \smallskip\noindent\textup{ii)}
        Assume that $\alpha_i>2H_i/(1+2H_i)$ for each $i=1,\dots,d$. For any $T>0$, $x,y\in\real^d$, $p>1$
        and all bounded Borel functions $f:\real^d\to [0,\infty)$
         \begin{align*}
            \big(P_Tf(y)\big)^p
            \leq &P_Tf^p(x)\cdot
                \exp\Bigg[\frac{Cp}{p-1}\,\Phi_{u,k}^2\left(T,\|x-y\|_1\right)\left(1+\frac{d}{T^{\kappa_1}\wedge T^{\kappa_2}}\right)\\
            &\quad\mbox{} + C(p-1)\sum_{i=1}^d\left(\frac{p\Phi_{u,k}^2\left(T,\|x-y\|_1\right)}{(p-1)^2}\right)^{\frac{\alpha_i}{\alpha_i-2H_i(1-\alpha_i)}}
            T^{-\frac{2H_i}{\alpha_i-2H_i(1-\alpha_i)}}\Bigg].
        \end{align*}
        If, in addition, $\liminf_{r\downarrow 0}\phi_i(r)r^{-\alpha_i}>0$
        for each $i$,
        then
        \begin{align*}
            \big(P_Tf(y)\big)^p
            \leq &P_Tf^p(x)\cdot
                \exp\Bigg[\frac{Cdp}{p-1}\frac{\Phi_{u,k}^2\left(T,\|x-y\|_1\right)}{T^{\kappa_1}\wedge T^{\kappa_2}}\\
                &\quad\mbox{}+C(p-1)\sum_{i=1}^d\left(\frac{p\Phi_{u,k}^2\left(T,\|x-y\|_1\right)}{(p-1)^2}\right)^{\frac{\alpha_i}{\alpha_i-2H_i(1-\alpha_i)}}
                    T^{-\frac{2H_i}{\alpha_i-2H_i(1-\alpha_i)}}\Bigg].
        \end{align*}

    \smallskip\noindent\textup{iii)}
        If \eqref{nl-e07} holds for $u(s)=cs$ and some constant $c>0$, then for $T>0$, $x\in\real^d$ and all bounded Borel functions $f:\real^d\to\real$
        $$
            |\nabla P_Tf|^2(x)
            \leq
            \left\{P_Tf^2(x)-\big(P_Tf(x)\big)^2\right\}
            \frac{Cd}{(T\wedge1)^{\kappa_2}}
            \left(1+c\int_0^T k(s)\,\eup^{cK(s)}\,\dup s\right)^2.
        $$
        If, in addition,
        $\liminf_{r\downarrow 0}\phi_i(r) r^{-\alpha_i}>0$
        for each $i$, then
        $$
            |\nabla P_Tf|^2(x)
            \leq
            \left\{P_Tf^2(x)-\big(P_Tf(x)\big)^2\right\}
            \frac{Cd}{T^{\kappa_1}\wedge T^{\kappa_2}}
            \left(1+c\int_0^T k(s)\,\eup^{cK(s)}\,\dup s\right)^2.
        $$
\end{corollary}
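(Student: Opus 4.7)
The overall strategy is to feed the moment estimates for subordinators of \cite[Thm.~3.8]{DS14} into Theorem~\ref{nl-02}. All three parts of Theorem~\ref{nl-02} bound the relevant Harnack-type quantity by an expression whose only dependence on the time-changes is through the scalar random variables $(Z^{(i)}(T))^{-2H_i}$, $1\le i\le d$. So the task reduces to (a) bounding the corresponding moments or exponential moments of these variables in terms of $T$ and (b) repackaging the resulting bounds symmetrically across $i$ using the exponents $\kappa_1$ and $\kappa_2$.

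For part i), Theorem~\ref{nl-02}~i) reads
\[
    P_T\log f(y)\leq \log P_Tf(x)+\Phi_{u,k}^2(T,\|x-y\|_1)\sum_{i=1}^d\Theta_{H_i}\,\Ee\!\left[(Z^{(i)}(T))^{-2H_i}\right].
\]
The polynomial moment statement in \cite[Thm.~3.8(a)]{DS14} gives, under $\liminf_{r\to\infty}\phi_i(r)r^{-\alpha_i}>0$, an estimate of the form $\Ee[(Z^{(i)}(T))^{-2H_i}]\le C_i(T\wedge 1)^{-2H_i/\alpha_i}$; under the additional assumption $\liminf_{r\downarrow 0}\phi_i(r)r^{-\alpha_i}>0$, this improves to $\Ee[(Z^{(i)}(T))^{-2H_i}]\le C_i\,T^{-2H_i/\alpha_i}$ for all $T>0$. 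Using $T^{-2H_i/\alpha_i}\le T^{-\kappa_2}$ when $T\le 1$ and $T^{-2H_i/\alpha_i}\le T^{-\kappa_1}$ when $T\ge 1$, summing the $d$ terms and absorbing the constants $\Theta_{H_i}$ into one constant $C=C_{H_1,\ldots,H_d,\alpha_1,\ldots,\alpha_d}$ yields the stated $Cd/(T\wedge 1)^{\kappa_2}$ and $Cd/(T^{\kappa_1}\wedge T^{\kappa_2})$ respectively. Part iii) is proved identically by plugging the same polynomial moment bound into Theorem~\ref{nl-02}~iii); the only algebraic difference is that when $u(s)=cs$ the quantity $\Phi_{u,k}^2(T,\cdot)$ is the explicit multiplicative factor $(1+c\int_0^T k(s)\eup^{cK(s)}\dup s)^2$ that already appears in the theorem.

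For part ii), the starting point is
\[
    (P_Tf(y))^p\le P_Tf^p(x)\cdot\left(\Ee\exp\!\Big[\tfrac{p}{(p-1)^2}\Phi_{u,k}^2(T,\|x-y\|_1)\sum_{i=1}^d\tfrac{\Theta_{H_i}}{(Z^{(i)}(T))^{2H_i}}\Big]\right)^{p-1}.
\]
The key observation is that the $Z^{(i)}$ are mutually independent, so the exponential of a sum factorises and the expectation becomes $\prod_{i=1}^d\Ee\exp[\delta_i/(Z^{(i)}(T))^{2H_i}]$ with $\delta_i=\tfrac{p\Theta_{H_i}}{(p-1)^2}\Phi_{u,k}^2(T,\|x-y\|_1)$. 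Each single-coordinate exponential moment is then controlled by \cite[Thm.~3.8(b)]{DS14}; the constraint $\alpha_i>2H_i/(1+2H_i)$ is precisely the integrability condition needed there for $\theta=2H_i$. That estimate produces, up to a constant depending only on $(H_i,\alpha_i)$, a bound of the form $\exp[C\delta_i/T^{2H_i/\alpha_i}+C\delta_i^{\alpha_i/(\alpha_i-2H_i(1-\alpha_i))}T^{-2H_i/(\alpha_i-2H_i(1-\alpha_i))}]$; the exponent $\alpha_i/(\alpha_i-2H_i(1-\alpha_i))$ is the conjugate index that emerges from the Young-type split used in \cite{DS14}. Taking the product over $i$ turns the sum in $i$ into the $\sum_i$ inside the exponential, raising to the $(p-1)$-th power converts $\delta_i$ to the form $(p\Phi_{u,k}^2/(p-1)^2)^{\cdots}$, and a uniform bound $T^{-2H_i/\alpha_i}\le (T^{\kappa_1}\wedge T^{\kappa_2})^{-1}$ collects the linear $\Phi_{u,k}^2$-term into the single factor $1+d/(T^{\kappa_1}\wedge T^{\kappa_2})$ (or just $d/(T^{\kappa_1}\wedge T^{\kappa_2})$ under the two-sided $\liminf$ assumption, since then the `$1$' becomes irrelevant for small $T$ and the analogous bound over $(T\wedge 1)$ appears otherwise).

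The main obstacle is purely bookkeeping: tracking the precise exponents on $T$ produced by \cite[Thm.~3.8(b)]{DS14} and verifying that, after taking the product over the $d$ independent coordinates and then the $(p-1)$-th power, the two terms in the exponent match the stated expression. In particular one must check that the linear-in-$\Phi^2$ term aggregates to the $Cdp/(p-1)$ prefactor, while the nonlinear-in-$\Phi^2$ term distributes as $\sum_{i=1}^d$ with exponents $\alpha_i/(\alpha_i-2H_i(1-\alpha_i))$, which is exactly the format in the statement. Once this is verified coordinate-wise, the only remaining step is the reduction from $(T\wedge 1)^{-\kappa_2}$ to $(T^{\kappa_1}\wedge T^{\kappa_2})^{-1}$ under the additional assumption at zero, which is the same dichotomy as in Corollary~\ref{sde-52}.
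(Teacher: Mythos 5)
Your proposal is correct and follows exactly the route the paper takes: the paper's proof of Corollary~\ref{nl-10} is the one sentence ``Combining Theorem \ref{nl-02} and \cite[Theorem 3.8\,(a) and (b)]{DS14}, we obtain the following result,'' and you have simply filled in the mechanics behind that citation. Your elaboration of part ii) is the genuinely nontrivial content: the factorisation of the exponential moment into a product over $i$ using the mutual independence of the $Z^{(i)}$, the coordinate-wise application of the exponential moment estimate from \cite[Thm.~3.8(b)]{DS14} (with the threshold $\alpha_i>2H_i/(1+2H_i)$ as the integrability condition for $\theta=2H_i$), and the bookkeeping after raising to the power $p-1$ that turns $\delta_i=\tfrac{p\Theta_{H_i}}{(p-1)^2}\Phi_{u,k}^2$ into the stated powers of $\tfrac{p\Phi_{u,k}^2}{(p-1)^2}$ and of $T$. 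The only place where your wording is a little loose is the penultimate sentence of the ii) discussion, where the two cases (with and without $\liminf_{r\downarrow 0}\phi_i(r)r^{-\alpha_i}>0$) get run together; the clean statement is that without the condition at zero each factor gives $1+T^{-2H_i/\alpha_i}$, which aggregates to something comparable to $1+d/(T^{\kappa_1}\wedge T^{\kappa_2})$, whereas with the condition at zero the additive $1$ disappears. This is a minor phrasing issue, not a gap.
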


If the $Z^{(i)}$ are inverse subordinators, we cannot expect that a power-Harnack inequality will hold, see Remark \ref{sde-62}.
Combining Lemma \ref{sde-58} with Theorem \ref{nl-02} i) \& iii), we still have the following corollary.

\begin{corollary}\label{nl-12}
    Assume that \eqref{nl-e03} and \eqref{nl-e07} hold,
    and that $Z^{(i)}$ is for each $i=1,\dots,d$ an inverse subordinator with Bernstein function $\phi_i$.

    Moreover, assume that $\limsup_{r\downarrow 0}\phi_i(r)r^{-\alpha_i}<\infty$ and $\limsup_{r\to\infty}\phi_i(r)r^{-\alpha_i}<\infty$ for some $\alpha_i>0$. Let
    $$
        \kappa_3:=2\min_{1\leq i\leq d}H_i\alpha_i
    \quad\text{and}\quad
        \kappa_4:=2\max_{1\leq i\leq d}H_i\alpha_i.
    $$
    Then there exists some constant $C=C_{\alpha_1,\dots,\alpha_d, H_1,\dots,H_d}>0$ such that the following assertions \textup{i)}, \textup{ii)} hold.

    \smallskip\noindent\textup{i)}
        For $T>0$, $x,y\in\real^d$ and all bounded Borel functions $f: \real^d\to [1,\infty)$
        $$
            P_T\log f(y)
            \leq
            \log P_Tf(x) + \frac{Cd}{T^{\kappa_3}\wedge T^{\kappa_4}}\,\Phi_{u,k}^2\left(T,\|x-y\|_1\right).
        $$

    \smallskip\noindent\textup{ii)}
        If \eqref{nl-e07} holds with $u(s)=cs$ for some constant $c>0$, then for $T>0$, $x\in\real^d$ and all bounded Borel functions $f:\real^d\to\real$
        $$
            |\nabla P_Tf|^2(x)
            \leq
            \left\{P_Tf^2(x)-\big(P_Tf(x)\big)^2\right\}
            \frac{Cd}{T^{\kappa_3}\wedge T^{\kappa_4}}
            \left(1+c\int_0^T k(s)\,\eup^{cK(s)}\,\dup s\right)^2.
        $$
\end{corollary}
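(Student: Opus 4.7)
The plan is to combine Theorem \ref{nl-02} i) and iii) with the inverse-subordinator moment estimate of Lemma \ref{sde-58}, applied coordinate-by-coordinate. This parallels the passage from Theorem \ref{sde-02} to Corollary \ref{sde-54} in the one-dimensional setting; since $Z^{(1)},\dots,Z^{(d)}$ are independent of the driving fractional Brownian motions and of $V$, Theorem \ref{nl-02} is directly applicable to the random time-change.

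First, I would verify that Lemma \ref{sde-58} applies separately to each coordinate. The hypotheses $\limsup_{r\downarrow 0}\phi_i(r)r^{-\alpha_i}<\infty$ and $\limsup_{r\to\infty}\phi_i(r)r^{-\alpha_i}<\infty$ match those of the lemma with $\sigma=\alpha_i$, and $H_i\in(0,1/2)$ guarantees that $\theta:=2H_i\in(0,1)$ as the lemma requires. The lemma then yields
$$\Ee\!\left[\big(Z^{(i)}(T)\big)^{-2H_i}\right] \leq C_{\alpha_i,H_i}\, T^{-2H_i\alpha_i},\qquad T>0,\; i=1,\dots,d.$$

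Next, I would express these individual polynomial rates in terms of the global exponents $\kappa_3=2\min_{1\leq i\leq d} H_i\alpha_i$ and $\kappa_4=2\max_{1\leq i\leq d} H_i\alpha_i$. Since each $2H_i\alpha_i$ lies in $[\kappa_3,\kappa_4]$, a short case split on whether $T\geq 1$ or $T<1$ shows $T^{-2H_i\alpha_i}\leq 1/(T^{\kappa_3}\wedge T^{\kappa_4})$ in both regimes. Summing over $i$ and absorbing the constants $\Theta_{H_i}$ produces a bound of the form $\sum_{i=1}^d\Ee[\Theta_{H_i}(Z^{(i)}(T))^{-2H_i}]\leq Cd/(T^{\kappa_3}\wedge T^{\kappa_4})$, where $C$ depends only on the $\alpha_i$ and $H_i$.

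Plugging this estimate into Theorem \ref{nl-02} i) gives part i) by direct substitution. For part ii), the assumption $u(s)=cs$ makes Theorem \ref{nl-02} iii) available, and the same expectation bound, multiplied by the prefactor $2(1+c\int_0^T k(s)\,\eup^{cK(s)}\,\dup s)^2$ arising there, produces the claimed gradient estimate. I do not anticipate any serious obstacle: beyond direct substitution, the only mildly delicate point is the two-regime comparison of polynomial rates used to pass from the $i$-dependent exponent $2H_i\alpha_i$ to the single factor $1/(T^{\kappa_3}\wedge T^{\kappa_4})$, and this is a routine observation about monomials.
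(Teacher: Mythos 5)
Your proof is correct and follows exactly the route the paper sketches: apply Lemma \ref{sde-58} coordinate-by-coordinate with $\sigma=\alpha_i$, $\theta=2H_i\in(0,1)$, then use $\kappa_3\le 2H_i\alpha_i\le\kappa_4$ and the two-regime monomial comparison to uniformize the rates into $1/(T^{\kappa_3}\wedge T^{\kappa_4})$, and plug into Theorem \ref{nl-02}~i) and iii). The paper's own proof is the one-line ``combine Lemma \ref{sde-58} with Theorem \ref{nl-02}~i) and iii)'', so your argument fills in precisely the details the authors left implicit, with no divergence in method.
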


\begin{ack}
    We thank two anonymous referees for their
    critical comments which helped
    us to improve the presentation
    of our paper.
\end{ack}

\end{document}